\def\titlerunning#1{\gdef\titrun{#1}}
\def\author#1{\gdef\autrun{\def\and{\unskip, }#1}\gdef\@author{#1}}
\def\address#1{{\def\and{\\\hspace*{18pt}}\renewcommand{\thefootnote}{}%
		\footnote {#1}}%
	\markboth{\autrun}{\titrun}}
\def\email#1{e-mail: #1}
\def\keywords#1{\par\medskip
	\noindent\textbf{Keywords.} #1}
\newtheorem{theorem}{Theorem}[section]
\newtheorem{corollary}[theorem]{Corollary}
\newtheorem{lemma}[theorem]{Lemma}
\newtheorem{proposition}[theorem]{Proposition}
\theoremstyle{definition}
\newtheorem{definition}[theorem]{Definition}
\newtheorem{remark}[theorem]{Remark}
\numberwithin{equation}{section}
\def \C {\mathbb{C}}
\def \a {\alpha }
\def \b {\beta}
\def \de {\delta}
\def \De {\Delta}
\def \la {\lambda}
\def \La {\Lambda}
\def\w {\omega}
\def\Om{\Omega}
\def\pa{\partial}
\def\na {\nabla}
\begin{document}
	
\baselineskip=17pt

\titlerunning{On harmonic symmetries for locally conformally K\"{a}hler manifolds}
\title{On harmonic symmetries for locally conformally K\"{a}hler manifolds}

\author{Teng Huang}

\date{}

\maketitle

\address{Teng Huang: School of Mathematical Sciences, University of Science and Technology of China; CAS Key Laboratory of Wu Wen-Tsun Mathematics,  University of Science and Technology of China, Hefei, Anhui, 230026, People’s Republic of China; \email{htmath@ustc.edu.cn;htustc@gmail.com}}
\begin{abstract}
In this article, we study harmonic symmetries on the compact locally conformally K\"{a}hler manifold $M$ of $dim_{\C}=n$. The space of harmonic symmetries is a subspace of harmonic differential forms which defined by the kernel of a certain Laplacian-type operator $\square$. We observe that the spaces $\ker(\square)\cap\Om^{l}=\{0\}$ for any $|l-n|\geq2$ and $\ker\De_{\bar{\pa}}\cap P^{k,n-1-k}\cap\ker(i_{\theta^{\sharp}})\cong\ker(\square^{k,n-1-k})$, $\ker\De_{\bar{\pa}}\cap P^{k,n-k}\cong\ker(\square^{k,n-k})$. Furthermore, suppose that $M$ is a Vaisman manifold, we prove that
(i) $\a$ is $(n-1)$-form in $\ker(\square)$ if only if  $\a$ is a transversally harmonic and transversally effective $\mathcal{V}$-foliate form; (ii) $\a$ is a $(p,n-p)$-form in $\ker(\square^{p,n-p})$ if only if
there are two forms $\b_{1}\in\mathcal{S}^{p-1,n-p}$ and $\b_{2}\in\mathcal{S}^{p,n-p-1}$ such that $\a=\theta^{1,0}\wedge\b_{1}+\theta^{0,1}\wedge\b_{2}$.
\end{abstract}
\keywords{LCK manifold, Vaisman manifold, harmonic symmetries, Hodge theory}
\section{Introduction}
On a compact complex manifold, one can consider two different kinds of invariants: the topological ones of the underlying (real) manifold and the complex ones. Among the first ones, a fundamental role is played by de Rham cohomology, and among the second ones, we recall the Dolbeault, Bott–Chern and Aeppli cohomologies. The dimensions of Dolbeault, Bott-Chern and Aeppli cohomologies which are bounded from below by topological quantities, such as Betti numbers (see \cite{AT13,AT15}). But there are comparatively few reverse inequalities, and these are desirable for showing that a complex manifold has non-trivial topology, or conversely that a smooth manifold does not have a complex structure.

We denote by $M$ a compact Hermitian manifold $M$, that is, $M$ is a compact complex manifold with compatible metric. In \cite{Wil20,Wil}, Wilson described some topological and geometric inequalities for $M$. They are expressed in terms of the kernel of a certain Laplacian-type operator $$\square=\De_{\pa}+\De_{\bar{\pa}}+\De_{\tau}+\De_{\bar{\tau}}+\De_{\la}+\De_{\bar{\la}}.$$ 
This is a real operator, and the last four summands are all order zero. The kernel of this second-order self-adjoint elliptic operator $\square$ determines a subspace of $\De_{d}$-harmonic forms that satisfies the Serre, Hodge, and conjugation dualities, generalizing the K\"{a}hler case. Moreover,  there is an induced representation of $\mathfrak{sl}(2,\mathbb{C})$ on the harmonic forms on $\ker(\square)$, yielding a generalization of hard Lefschetz duality (see \cite[Theorem 3.1]{Wil20}). The result relies on a generalization of the K\"{a}hler identities to the Hermitian setting \cite{Dem86,Gri66}.

Define the space of $\square$-harmonic forms in degree $k$ by letting
$$\mathcal{H}_{\square}^{k}=\ker(\square)\cap\Om^{k}$$
where $\Om^{k}$ denotes the space of $k$-forms.  We denote by $h^{k}_{\square}$ the dimension of $\mathcal{H}_{\square}^{k}$. It remains to further study what the dimensions of those subspaces tell us about a given Hermitian structure, and conversely, to determine what are the permissible numbers for a given complex structure.

In this article, we are interested in topological and complex analytic properties of compact LCK manifolds. A locally conformally K\"{a}hler (LCK) manifold is a Hermitian manifold whose metric is conformal to a K\"{a}hler metric in some neighbourhood of every point. This definition is equivalent to the existence of a global closed one-form $\theta$ (called the $\textbf{Lee}$ form) such that the fundamental two-form $\w$ satisfies  $d\w=\theta\wedge\w$ (see \cite{DO98,Tsu94,Vai82}). These manifolds appear naturally in complex geometry. Most examples of compact non-K\"{a}hler manifolds studied in complex geometry admit an LCK structure. An interesting example of such manifolds is offered by the Hopf manifolds \cite{Vai76}, which are compact and have no K\"{a}hler metric at all.  In many situations, the LCK structure becomes useful for the study of topology and complex geometry of an LCK manifold. Throughout our article, we always say that an LCK manifold $M$ cannot  admit any Kahler metric, that is, $M$ is a non-K\"{a}herian manifold.
\begin{theorem}[=Theorem \ref{T4} and \ref{P4}]
Let $(M,J,\theta)$ be a compact locally conformally K\"{a}hler manifold of $dim_{\C}=n$. Then for any $|l-n|\geq 2$, we have
$$\mathcal{H}_{\square}^{l}=\{0\}.$$
Furthermore,
\begin{equation*}
\ker\De_{\bar{\pa}}\cap P^{k,n-1-k}\cap\ker(i_{\theta^{\sharp}})\cong\ker(\square^{k,n-1-k}),
\end{equation*}
\begin{equation*}
\ker\De_{\bar{\pa}}\cap P^{k,n-k}\cong \ker(\square^{k,n-k}).
\end{equation*}
\end{theorem}
If $(M,J)$ is a compact complex manifold, then $\dim\ker(\square^{0,0})=1$ if and only if $M$ is K\"{a}hlerian (resp. $\dim\ker(\square^{0,0})=0$ if and only if $M$ is non-K\"{a}hlerian) (see \cite{Wil20}). In \cite{Vai79}, the author have given some sufficient conditions for a compact LCK manifold to be K\"{a}hler. Following Theorem \ref{T4}, we can give a sufficient condition to such that a compact complex non-K\"{a}hlerian manifold admits a Hermitian metric $g$ which is not an LCK metric.
\begin{corollary}
Let $(M,J,g)$ be a compact, complex, non-K\"{a}hlerian manifold of $dim_{\C}=n$. If there is a constant $k$, $|k-n|\geq 2$, such that  $h^{k}_{\square}\geq 1$, then $g$ is not an locally conformally K\"{a}hler metric. 
\end{corollary}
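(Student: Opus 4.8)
The plan is to deduce the corollary directly from the first assertion of Theorem~\ref{T4} (that is, the vanishing statement $\mathcal{H}_{\square}^{l}=\{0\}$ for $|l-n|\geq 2$) by contraposition. I would argue as follows. Suppose, toward a contradiction, that $g$ \emph{is} an LCK metric on $(M,J)$. Since $M$ is assumed non-K\"ahlerian, this makes $(M,J,g)$ a genuine compact LCK manifold in the sense fixed throughout the article, namely one admitting no K\"ahler metric; in particular it carries a Lee form $\theta$ with $d\omega=\theta\wedge\omega$, so the hypotheses of Theorem~\ref{T4} are satisfied. The non-K\"ahlerian assumption is essential at exactly this point: it guarantees that ``$g$ is LCK'' really places us in the non-K\"ahler LCK regime of the Theorem, rather than in the K\"ahler case, where the order-zero terms of $\square$ vanish, $\square$ reduces to the de Rham Laplacian, and $h^{k}_{\square}=b_{k}$ need not vanish for $|k-n|\geq 2$.

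Next I would observe that the operator $\square=\De_{\pa}+\De_{\bar{\pa}}+\De_{\tau}+\De_{\bar{\tau}}+\De_{\la}+\De_{\bar{\la}}$ entering the definition of $h^{k}_{\square}$ is built solely from the Hermitian data of $g$. Consequently, under the standing assumption that $g$ is LCK, the operator $\square$ and the spaces $\mathcal{H}^{k}_{\square}$ occurring in the corollary coincide with those to which the Theorem refers. Since by hypothesis $|k-n|\geq 2$, applying Theorem~\ref{T4} with $l=k$ yields $\mathcal{H}^{k}_{\square}=\{0\}$, that is $h^{k}_{\square}=0$. This contradicts the hypothesis $h^{k}_{\square}\geq 1$, and therefore $g$ cannot be an LCK metric, which is the assertion of the corollary.

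I do not expect a genuine analytic obstacle here, since the entire substantive content is already carried by Theorem~\ref{T4} and the corollary is essentially its logical contrapositive. The only point that requires care is the bookkeeping noted above, namely verifying that the non-K\"ahlerian hypothesis correctly confines attention to the non-K\"ahler LCK setting in which the vanishing theorem is available, and confirming that the $\square$ of the corollary is precisely the metric-dependent operator appearing in the Theorem. Once these identifications are in place, the contradiction is immediate.
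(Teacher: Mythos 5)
Your proof is correct and is essentially the paper's argument: the corollary is stated without a separate proof precisely because it is the immediate contrapositive of Theorem~\ref{T4}, which is exactly how you argue. Your side remarks — that the non-K\"ahlerian hypothesis places $(M,J,g)$ in the paper's standing LCK (non-K\"ahler) convention so the vanishing theorem applies, and that $\square$ depends only on the Hermitian data of $g$ — are the right bookkeeping points and match the paper's intent.
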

Among the LCK manifolds, a distinguished class is the Vaisman manifold. The topology of compact Vaisman manifolds is very different from that of K\"{a}hler manifolds. In \cite{OV19}, the authors studied harmonic forms and Hodge decomposition on Vaisman and Sasakian manifolds. In \cite{Vai82}, Vaisman established a Hodge decomposition theorem of $\De_{d}$-harmonic $k$-form for any $k\leq n-1$. In \cite{Tsu94}, Tsukada obtained a complex version Hodge decomposition theorem on Vaisman manifold. But none of them give a Hodge decomposition of $\De_{d}$-harmonic $n$-form. Here, we study the Hodge decomposition of the forms in $\ker\square\cap\Om^{k}$, $k=n-1,n$, on Vaisman manifold. We will develop a version of Hodge theory on the Vaisman manifold, hence we can give a Hodge decomposition of the $\De_{d}$-harmonic $n$-forms.
\begin{theorem}[=Theorem \ref{T8} and \ref{T9}]\label{T1}
Let $(M,J,\theta)$ be a compact Vaisman manifold of $dim_{\C}=n$. Then 
$$\ker(\square^{p,n-1-p})\cong\mathcal{S}^{p,n-1-p}(\mathcal{V}),$$
$$
\mathcal{H}_{d}^{n}\cong\mathcal{H}^{n}_{\square}\cong\bigoplus_{k=0}^{n}\mathcal{H}^{k,n-k}_{\bar{\pa}},
$$
where $\mathcal{S}^{p,q}$ denote the vector space of transversally harmonic and transversally effective $\mathcal{V}$-foliate $(p,q)$-forms. 
Furthermore, the following four conditions are equivalent:\\ 
(i) $\a\in\ker(\square^{p,n-p})$, \\
(ii) $\a\in\ker\De_{\bar{\pa}}\cap\Om^{p,n-p}$,\\
(iii) $\a\in\ker\De_{d}\cap\Om^{p,n-p}$,\\
(iv) there are $\b_{1}\in\mathcal{S}^{p-1,n-p}$ and $\b_{2}\in\mathcal{S}^{p,n-p-1}$ such that
$$\a=\theta^{1,0}\wedge\b_{1}+\theta^{0,1}\wedge\b_{2}.$$
\end{theorem}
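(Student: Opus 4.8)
The plan is to reduce everything to the transverse K\"{a}hler Hodge theory of the canonical foliation $\mathcal{V}=\langle\theta^{\sharp},J\theta^{\sharp}\rangle$, exploiting that on a Vaisman manifold both $\theta$ and $\theta^{c}=J\theta$ are parallel. Writing $\theta^{1,0}=\frac12(\theta-i\theta^{c})$ and $\theta^{0,1}=\ov{\theta^{1,0}}$, every form splits according to how many of the factors $\theta^{1,0},\theta^{0,1}$ it contains, with transverse part a basic form for the complex $(n-1)$-dimensional transverse K\"{a}hler structure (transverse K\"{a}hler form $\w_{0}$). I will lean repeatedly on three facts: (a) transverse primitive forms have degree $\leq n-1$, so by transverse hard Lefschetz any transversally harmonic form of transverse degree $n$ lies in the image of $L$; (b) since $\theta^{\sharp},J\theta^{\sharp}$ are parallel Killing fields, the operators $i_{\theta^{\sharp}},L_{\theta^{\sharp}}$ commute with the Laplacians, so harmonicity respects the foliation splitting; and (c) the Vaisman structure equation $d\w=\theta\wedge\w$ gives $d\theta^{c}\sim\w_{0}$, which is exactly what couples the different foliation-degree components of the harmonic equations.

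For the first isomorphism $\ker(\square^{p,n-1-p})\cong\mathcal{S}^{p,n-1-p}(\mathcal{V})$ I start from the identification already proved in Theorem~\ref{T4}/\ref{P4}, namely $\ker(\square^{p,n-1-p})\cong\ker\De_{\bar{\pa}}\cap P^{p,n-1-p}\cap\ker(i_{\theta^{\sharp}})$, and recognize the right-hand side as $\mathcal{S}^{p,n-1-p}(\mathcal{V})$. First I would show that a $\De_{\bar{\pa}}$-harmonic primitive $(p,n-1-p)$-form $\a$ with $i_{\theta^{\sharp}}\a=0$ is automatically $\mathcal{V}$-foliate: using fact (b), $L_{\theta^{\sharp}}$ preserves $\a$, and combined with $i_{\theta^{\sharp}}\a=0$ and the closedness built into harmonicity one deduces $i_{J\theta^{\sharp}}\a=0$ as well, so $\a$ is basic. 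On basic forms $\De_{\bar{\pa}}$ restricts to the transverse $\bar{\pa}$-Laplacian and $L=\w\wedge\cdot$ restricts to the transverse Lefschetz operator, so ``$\De_{\bar{\pa}}$-harmonic and primitive'' becomes exactly ``transversally harmonic and transversally effective,'' yielding the claimed isomorphism.

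For the chain of equivalences in the $(p,n-p)$ case I would prove (iv)$\Rightarrow$(i),(ii),(iii) by direct verification, (i)$\Rightarrow$(ii) by the inclusion from Theorem~\ref{T4}/\ref{P4}, and then (ii)$\Rightarrow$(iv) and (iii)$\Rightarrow$(iv) by one common foliation-degree analysis, closing the loop through (iv). The implication (iv)$\Rightarrow$(\emph{everything}) is a computation with the transverse K\"{a}hler identities: if $\b_{1},\b_{2}$ are transversally harmonic and effective, then since $\theta^{1,0},\theta^{0,1}$ are parallel one checks that $\theta^{1,0}\wedge\b_{1}+\theta^{0,1}\wedge\b_{2}$ is annihilated by $\pa,\bar{\pa},\pa^{*},\bar{\pa}^{*}$ and by the order-zero torsion pieces of $\square$, hence lies in all three kernels. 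For (ii)$\Rightarrow$(iv) I decompose a $\De_{\bar{\pa}}$-harmonic $(p,n-p)$-form $\a$ by its $\theta^{1,0},\theta^{0,1}$-degree into a no-factor part $\a_{0}$, one-factor parts $\theta^{1,0}\wedge\b_{1}+\theta^{0,1}\wedge\b_{2}$, and a two-factor part $\theta^{1,0}\wedge\theta^{0,1}\wedge\gamma$ with $\a_{0},\b_{i},\gamma$ basic. Separating the harmonic equations by foliation degree and using fact (c), each basic coefficient is transversally harmonic; fact (a) then puts $\a_{0}$ (transverse degree $n$) in the image of $L$, and the coupling forces $\a_{0}=0$. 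The genuinely delicate point, which I expect to be the main obstacle, is eliminating the two-factor component $\gamma$: this is precisely where the order-zero operators $\De_{\tau},\De_{\bar{\tau}},\De_{\la},\De_{\bar{\la}}$ (equivalently $d\theta^{c}\sim\w_{0}$) must be pushed to show such a term cannot survive, leaving only the one-factor shape with $\b_{i}$ transversally harmonic and effective. The same separation applied to a $\De_{d}$-harmonic form gives (iii)$\Rightarrow$(iv), and on the resulting explicit shape the comparison of $\De_{d}$ and $\De_{\bar{\pa}}$ reduces to the transverse K\"{a}hler identity $\De_{d}=2\De_{\bar{\pa}}$ on basic forms.

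Finally, for the two displayed isomorphisms I would first explain the primitivity built into $\ker(\square^{k,n-k})$ conceptually, by combining the vanishing $\mathcal{H}^{l}_{\square}=\{0\}$ for $|l-n|\geq2$ (the first theorem) with Wilson's hard-Lefschetz $\mathfrak{sl}(2,\C)$-action on $\ker(\square)$: in the Lefschetz decomposition $\mathcal{H}^{n}_{\square}=\bigoplus_{j\geq0}L^{j}P^{n-2j}$, every summand with $j\geq1$ has its primitive part in degree $n-2j\leq n-2$, which vanishes, so $\mathcal{H}^{n}_{\square}$ is entirely primitive. The Hodge and conjugation dualities on $\ker(\square)$ then split it by bidegree as $\bigoplus_{k}\ker(\square^{k,n-k})$, and the equivalence (i)$\Leftrightarrow$(ii) identifies each summand with $\mathcal{H}^{k,n-k}_{\bar{\pa}}$, giving $\mathcal{H}^{n}_{\square}\cong\bigoplus_{k=0}^{n}\mathcal{H}^{k,n-k}_{\bar{\pa}}$. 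The remaining identification $\mathcal{H}^{n}_{d}\cong\mathcal{H}^{n}_{\square}$ is the new Hodge-decomposition statement at the middle degree $n$: the inclusion $\mathcal{H}^{n}_{\square}\subseteq\mathcal{H}^{n}_{d}$ is general, and for the reverse I would take a $\De_{d}$-harmonic $n$-form, run the foliation-degree analysis of the previous paragraph on each bidegree component to put it in the shape $\theta^{1,0}\wedge\b_{1}+\theta^{0,1}\wedge\b_{2}$, and then invoke (iv)$\Rightarrow$(i) to conclude it is $\square$-harmonic.
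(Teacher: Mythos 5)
Your overall architecture differs fundamentally from the paper's, and the difference is exactly where the gap lies. For the hard implications (ii)$\Rightarrow$(iv), (iii)$\Rightarrow$(iv), and for the inclusion $\mathcal{H}^{n}_{d}\subseteq\mathcal{H}^{n}_{\square}$, you propose a direct foliation-bidegree analysis of an individual harmonic form of middle degree $n$: decompose $\a=\a_{0}+\theta^{1,0}\wedge\b_{1}+\theta^{0,1}\wedge\b_{2}+\theta^{1,0}\wedge\theta^{0,1}\wedge\gamma$ into basic coefficients, decouple the harmonic equations, kill $\a_{0}$ by transverse hard Lefschetz and kill $\gamma$ using the order-zero operators. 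But this decoupling is precisely the analysis that Vaisman's and Tsukada's structure theorems (Theorems \ref{T6} and \ref{T7}) carry out only for degrees $\leq n-1$, and which is genuinely problematic at degree $n$ because $d(J\theta)=-\w'$ couples the components; you yourself flag the elimination of $\gamma$ as ``the main obstacle'' and offer no argument for it, and the claims that the basic coefficients are transversally harmonic and that ``the coupling forces $\a_{0}=0$'' are asserted rather than proved. A second, independent gap: to get $\mathcal{H}^{n}_{d}\subseteq\mathcal{H}^{n}_{\square}$ you apply the bidegree-wise implication (iii)$\Rightarrow$(iv) to each $(k,n-k)$-component of a $\De_{d}$-harmonic $n$-form, but on a non-K\"{a}hler Hermitian manifold the pure-type components of a $\De_{d}$-harmonic form are not known to be $\De_{d}$-harmonic --- that is part of what is being proved here, so the step is circular. (A smaller instance of the same issue occurs in your proof of the first isomorphism: that $\De_{\bar{\pa}}$-harmonicity, primitivity and $i_{\theta^{\sharp}}\a=0$ force $\a$ to be basic is exactly what needs proof; the paper obtains it from Tsukada's decomposition $\a=\b+\theta^{0,1}\wedge\gamma$ together with $0=i_{\theta^{\sharp}}\a=\frac{1}{2}\gamma$.)

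The paper avoids every one of these analyses by a dimension count, and this is the idea your proposal is missing. Proposition \ref{P3} (essentially your (iv)$\Rightarrow$(i) step: $\theta$ parallel plus the Hermitian identities give $\square(\theta^{1,0}\wedge\a)=\square(\theta^{0,1}\wedge\a)=0$ for $\a\in\ker(\square^{k,n-1-k})$) yields the lower bound $h^{p,n-p}_{\square}\geq s_{p,n-p-1}+s_{p-1,n-p}$. The easy inclusions $\ker(\square)\cap\Om^{n}\subseteq\ker\De_{d}\cap\Om^{n}$ and $\ker(\square^{k,n-k})\subseteq\ker\De_{\bar{\pa}}\cap\Om^{k,n-k}$, together with Tsukada's Euler-characteristic identities $b^{n}=2s_{n-1}=\sum_{k}h^{k,n-k}$ (Theorem \ref{T10}), give the matching upper bound $\sum_{p}h^{p,n-p}_{\square}\leq b^{n}=\sum_{p}\bigl(s_{p,n-p-1}+s_{p-1,n-p}\bigr)$. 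The squeeze forces every inequality to be an equality, and then (i)$\Leftrightarrow$(iv), $\ker\De_{d}\cap\Om^{n}=\ker(\square)\cap\Om^{n}$ and $\ker(\square^{k,n-k})=\ker\De_{\bar{\pa}}\cap\Om^{k,n-k}$ all follow at once as statements about nested subspaces of equal finite dimension --- no individual harmonic $n$-form is ever analyzed. If you want to keep your route, you must actually carry out the coupled-equation analysis at middle degree, which is new work not available in the literature; otherwise the counting argument via Proposition \ref{P3} and $b^{n}=2s_{n-1}$ is the ingredient you need to import.
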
 

\section{Preliminaries}
\subsection{Harmonic symmetries}
Throughout this section $(M,J,g)$ will denote a compact Hermitian manifold of complex dimension $n\geq2$, with fundamental two-form defined by $\w(X,Y)=g(X,JY)$. The space of all smooth $(p,q)$-forms (resp. $k$-forms) on $M$ denoted $\Om^{p,q}(M)$ (resp. $\Om^{k}(M)$. Let $\langle,\rangle$ denote the pointwise inner product. The global inner product is defined $$(\a,\b)=\int_{M}\langle\a,\b\rangle dV,$$
where $dV=\frac{\w^{n}}{n!}$. Another important operator is the operator $L$ of type $(1,1)$ defined by
$$L(\cdot)=\w\wedge\cdot,$$ 
and its adjoint $\La=\ast^{-1}L\ast$:
$$\langle\a,\La\b\rangle=\langle L\a,\b\rangle.$$
\begin{definition}
A differential $k$-form $\a_{k}$ with $k\leq n$ is called primitive, i.e., $\a_{k}\in P^{k}(M)$, if it satisfies the two equivalent conditions: (i) $\La\a_{k}=0$; (ii) $L^{n-k+1}\a_{k}=0$.	
\end{definition} 
One can also prove the following very important result: every $k$-form $\a$ on $M$ has a unique decomposition of the form 
$$\a=\sum_{r\geq\max{(0,k-n)}}L^{r}\b_{k-2r}$$
where all of the $\b_{j}$ are primitive forms of a corresponding degree and we denote $\b_{j}=0$ for $j\notin[0,n]$.

Let $\la=[\pa,L]=(\pa\w\wedge\cdot)$, so that $\bar{\la}=[\bar{\pa},L]=(\bar{\pa}\w\wedge\cdot)$. The operator $\bar{\la}$ has bidegree $(1,2)$ and governs the symplectic condition: a Hermitian manifold is K\"{a}hler if and only if $\bar{\la}=0$. In \cite{Dem86}, Demailly derives a set of Hermitian identities  which generalize the K\"{a}hler identities. Consider the zero-order torsion operator $\tau:=[\La,\la]$ of bidegree $(1,0)$. Demailly shown
\begin{equation}\label{E1}
\begin{split}
&[\La,\bar{\pa}]=-i(\pa^{\ast}+\tau^{\ast}),\\ &[\La,\pa]=i(\bar{\pa}^{\ast}+\bar{\tau}^{\ast}),\\
& [L,\bar{\pa}^{\ast}]=-i(\pa+\tau),\\
& [L,\pa^{\ast}]=i(\bar{\pa}+\bar{\tau})\\
\end{split}
\end{equation}
with K\"{a}hler identities recovered in the case $\tau=0$ and 
\begin{equation}\label{E2}
\begin{split}
&[\La,\tau]=-2i\bar{\tau}^{\ast},\ [L,\bar{\tau}]=3\bar{\la},\\
& [\La,\bar{\tau}]=2i\tau^{\ast},\ [L,\tau]=3\la,\\ &[L,\tau^{\ast}]=-2i\bar{\tau},\ [\La,\bar{\tau}^{\ast}]=-3\bar{\la}^{\ast},\\
&[L,\bar{\tau}^{\ast}]=2i\tau,\ [\La,\tau^{\ast}]=-3\la^{\ast}.\\
\end{split}
\end{equation}
We also have (see \cite{Wil20}) 
\begin{equation}\label{E3}
\begin{split}
&[\La,\la]=\tau,\ [L,\la]=0,\\
&[\La,\bar{\la}]=\bar{\tau},\ [L,\bar{\la}]=0,\\
&[L,\la^{\ast}]=-\tau^{\ast},\ [\La,\la^{\ast}]=0,\\
&[L,\bar{\la}^{\ast}]=-\bar{\tau}^{\ast}, [\La,\bar{\la}^{\ast}]=0.\\
\end{split}
\end{equation}
For any operator $\de$, let $\De_{\de}=[\de,\de^{\ast}]$, and let $\De_{\de}^{p,q}$ denote the restriction to $\Om^{p,q}$. 
\begin{proposition}(\cite[Corollary 2.1]{Wil20})
For any Hermitian manifold there is an induced representation of $\mathfrak{sl}(2,\C)$ on the space $\ker(\De_{\tau}+\De_{\bar{\tau}}+\De_{\la}+\De_{\bar{\la}})$.
\end{proposition}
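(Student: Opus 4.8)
The plan is to realise the desired representation through the classical Lefschetz triple. On any Hermitian manifold the operators $L$, $\La=L^{\ast}$ and $H:=[L,\La]$ satisfy, purely as pointwise linear algebra on $\Om^{\bullet}(M)$, the commutation relations $[H,L]=2L$, $[H,\La]=-2\La$ and $[L,\La]=H$, so that $(L,\La,H)$ is an $\mathfrak{sl}(2,\C)$-triple acting on all forms. Writing $P:=\De_{\tau}+\De_{\bar{\tau}}+\De_{\la}+\De_{\bar{\la}}$, the entire content of the statement is that the subspace $\ker(P)$ is invariant under $L$, $\La$ and $H$; the relations then restrict automatically and furnish the induced representation.

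The first step is to describe $\ker(P)$ explicitly. Since $\tau,\bar{\tau},\la,\bar{\la}$ are order-zero operators, each $\De_{\de}=\de\de^{\ast}+\de^{\ast}\de$ is a non-negative, order-zero self-adjoint operator, and pointwise $\langle\De_{\de}\a,\a\rangle=|\de\a|^{2}+|\de^{\ast}\a|^{2}$. Hence $P$ is a non-negative sum of such operators, and $\a\in\ker(P)$ if and only if $\de\a=0$ for every $\de$ in the collection
$$W:=\{\tau,\bar{\tau},\la,\bar{\la},\tau^{\ast},\bar{\tau}^{\ast},\la^{\ast},\bar{\la}^{\ast}\}.$$
In other words $\ker(P)$ is cut out, pointwise, as the common kernel of these eight torsion operators.

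The main step is then the observation that the linear span of $W$ is stable under $\mathrm{ad}_{L}$ and $\mathrm{ad}_{\La}$: reading off the brackets in \eqref{E2} and \eqref{E3} one checks that each of $[L,\de]$ and $[\La,\de]$, for $\de\in W$, is again a scalar multiple of an element of $W$ (for instance $[L,\tau]=3\la$, $[\La,\tau]=-2i\bar{\tau}^{\ast}$, $[L,\la^{\ast}]=-\tau^{\ast}$, $[\La,\la]=\tau$, and so on through the two tables). Granting this, let $\a\in\ker(P)$ and let $X\in\{L,\La\}$. For any $\de\in W$ we have $\de(X\a)=X(\de\a)-[X,\de]\a=-[X,\de]\a$, and since $[X,\de]$ lies in the span of $W$ it annihilates $\a$; thus $\de(X\a)=0$ for all $\de\in W$, i.e. $X\a\in\ker(P)$. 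Therefore $L$ and $\La$ preserve $\ker(P)$, and so does $H=[L,\La]=L\La-\La L$. Restricting the triple $(L,\La,H)$ to this invariant subspace yields the asserted $\mathfrak{sl}(2,\C)$-representation.

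The one point to be careful about — and the reason the argument is organised through the common-kernel description rather than a single commutation identity — is that $L$ does not commute with $P$ itself. A direct computation from \eqref{E2} and \eqref{E3} gives $[L,P]=2(\la\tau^{\ast}+\tau^{\ast}\la+\bar{\la}\bar{\tau}^{\ast}+\bar{\tau}^{\ast}\bar{\la})$, which is not identically zero, so one cannot conclude invariance of $\ker(P)$ from a naive $[L,P]=0$. What saves the argument is exactly the fact that $\ker(P)$ is the simultaneous kernel of all eight operators in $W$: each summand of $[L,P]$ contains a factor lying in $W$, so $[L,P]$ annihilates $\ker(P)$ even though it does not vanish as an operator. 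Beyond this, the only labour is the bookkeeping verification that every bracket $[L,\de]$, $[\La,\de]$ remains inside the span of $W$, which is immediate from the tables \eqref{E2} and \eqref{E3}.
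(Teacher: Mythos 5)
Your proof is correct, and it is organised around a different key lemma than the argument the paper relies on. The paper proves nothing itself --- the proposition is quoted from Wilson --- and the mechanism it records immediately afterwards is that the re-weighted operator $\De_{\tau}+\De_{\bar{\tau}}+3\De_{\la}+3\De_{\bar{\la}}$ commutes with $L$ and $\La$ as an exact operator identity on all forms, while its kernel coincides with $\ker(\De_{\tau}+\De_{\bar{\tau}}+\De_{\la}+\De_{\bar{\la}})$, so that $\{L,\La,H\}$ restricts to this kernel. You never need to discover the coefficients $(1,1,3,3)$: writing $P=\De_{\tau}+\De_{\bar{\tau}}+\De_{\la}+\De_{\bar{\la}}$, you identify $\ker(P)$ pointwise with the common kernel of the eight zero-order operators $W=\{\tau,\bar{\tau},\la,\bar{\la},\tau^{\ast},\bar{\tau}^{\ast},\la^{\ast},\bar{\la}^{\ast}\}$ (legitimate, since each $\De_{\de}$ is zero-order, Hermitian and non-negative at every point), check from (\ref{E2}) and (\ref{E3}) that the span of $W$ is $\mathrm{ad}_{L}$- and $\mathrm{ad}_{\La}$-stable, and conclude invariance from $\de(X\a)=-[X,\de]\a$. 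Both routes use exactly the same Demailly-type identities and are purely pointwise, so neither needs compactness or ellipticity --- consistent with the statement being about an arbitrary Hermitian manifold. What your organisation buys is self-containedness and transparency: your side computation $[L,P]=2(\la\tau^{\ast}+\tau^{\ast}\la+\bar{\la}\bar{\tau}^{\ast}+\bar{\tau}^{\ast}\bar{\la})$ is correct (the $\pm 2i(\tau\bar{\tau}+\bar{\tau}\tau)$ contributions from $[L,\De_{\tau}]$ and $[L,\De_{\bar{\tau}}]$ cancel, and the $\la$-Laplacians contribute $-(\la\tau^{\ast}+\tau^{\ast}\la)$ and its conjugate), and it makes explicit why naive commutation with $P$ fails yet does no harm on the common kernel. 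What Wilson's weighting buys is a reusable genuine identity: its restricted form $[L,\De_{\tau}+\De_{\bar{\tau}}+\De_{\la}+\De_{\bar{\la}}]|_{\ker}=0$ is invoked again later in the paper (proof of Lemma \ref{L2}), so if you adopt your argument you should record that this restriction also follows from it, since every summand of your $[L,P]$ has a factor lying in $W$. Incidentally, your brackets re-derive Wilson's identity for free: in $[L,\De_{\tau}+\De_{\bar{\tau}}+3\De_{\la}+3\De_{\bar{\la}}]$ the torsion terms enter with total coefficient $3-3=0$.
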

The operator $\De_{\tau}+\De_{\bar{\tau}}+3\De_{\la}+3\De_{\bar{\la}}$ is the one that commutes directly with $L$ and $\La$ on all forms (see \cite[Equation (4)]{Wil}).Noting that $\ker(\De_{\tau}+\De_{\bar{\tau}}+\De_{\la}+\De_{\bar{\la}})=\ker(\De_{\tau}+\De_{\bar{\tau}}+3\De_{\la}+3\De_{\bar{\la}})$. Hence, we get
\begin{equation*}
	\begin{split}
	&[L,\De_{\tau}+\De_{\bar{\tau}}+\De_{\la}+\De_{\bar{\la}}]|_{\ker(\De_{\tau}+\De_{\bar{\tau}}+\De_{\la}+\De_{\bar{\la}})}=0,\\
	&[\La,\De_{\tau}+\De_{\bar{\tau}}+\De_{\la}+\De_{\bar{\la}}]|_{\ker(\De_{\tau}+\De_{\bar{\tau}}+\De_{\la}+\De_{\bar{\la}})}=0.\\
	\end{split}
	\end{equation*}
\begin{lemma}\label{L3}
For any Hermitian manifold,  
	$$\ker(\la)\cap\ker(\bar{\la})\cap P^{k}\subset\ker(\De_{\tau}+\De_{\bar{\tau}}+\De_{\la}+\De_{\bar{\la}})\cap\Om^{k}$$	
	\end{lemma}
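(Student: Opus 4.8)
The plan is to show that the primitive form $\a$ is in fact annihilated by each of the eight operators $\tau,\bar{\tau},\la,\bar{\la}$ together with their formal adjoints $\tau^{\ast},\bar{\tau}^{\ast},\la^{\ast},\bar{\la}^{\ast}$. Since each $\De_{\de}=\de\de^{\ast}+\de^{\ast}\de$, we have $\De_{\de}\a=0$ as soon as $\de\a=0$ and $\de^{\ast}\a=0$; hence once all eight operators are shown to kill $\a$, every one of the four summands vanishes on $\a$ and so does $\De_{\tau}+\De_{\bar{\tau}}+\De_{\la}+\De_{\bar{\la}}$ by linearity, which is exactly the conclusion. Two of the needed relations, $\la\a=0$ and $\bar{\la}\a=0$, hold by hypothesis, so the task reduces to the six vanishings $\tau\a=\bar{\tau}\a=\tau^{\ast}\a=\bar{\tau}^{\ast}\a=\la^{\ast}\a=\bar{\la}^{\ast}\a=0$.

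The key is to exploit primitivity, $\La\a=0$, so as to peel the operators off one at a time through the commutator identities \eqref{E2} and \eqref{E3}, each new vanishing feeding the next. First, by \eqref{E3} we have $[\La,\la]=\tau$ and $[\La,\bar{\la}]=\bar{\tau}$, so applying these to $\a$ gives $\tau\a=\La\la\a-\la\La\a=0$ and likewise $\bar{\tau}\a=0$, because both $\la\a$ (resp. $\bar{\la}\a$) and $\La\a$ vanish. Next I would invoke $[\La,\bar{\tau}]=2i\tau^{\ast}$ and $[\La,\tau]=-2i\bar{\tau}^{\ast}$ from \eqref{E2}: applied to $\a$ these yield $\tau^{\ast}\a=0$ and $\bar{\tau}^{\ast}\a=0$, now that $\tau\a=\bar{\tau}\a=\La\a=0$. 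Finally, the identities $[\La,\tau^{\ast}]=-3\la^{\ast}$ and $[\La,\bar{\tau}^{\ast}]=-3\bar{\la}^{\ast}$ from \eqref{E2}, applied to $\a$, give $\la^{\ast}\a=0$ and $\bar{\la}^{\ast}\a=0$, since $\tau^{\ast}\a=\bar{\tau}^{\ast}\a=\La\a=0$. The nonzero numerical constants play no role: at every stage the relevant bracket satisfies $[\La,\de']\a=\La\de'\a-\de'\La\a$, and this vanishes because $\de'\a$ was obtained in the previous stage and $\La\a=0$ is primitivity.

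Collecting these facts, $\a$ lies in the common kernel of $\tau,\bar{\tau},\la,\bar{\la}$ and of all four adjoints, whence $\De_{\tau}\a=\De_{\bar{\tau}}\a=\De_{\la}\a=\De_{\bar{\la}}\a=0$, so $\a\in\ker(\De_{\tau}+\De_{\bar{\tau}}+\De_{\la}+\De_{\bar{\la}})\cap\Om^{k}$, as claimed. I do not anticipate a serious obstacle: the argument is a purely algebraic cascade driven by the single geometric input $\La\a=0$ together with the hypotheses $\la\a=\bar{\la}\a=0$, and it uses no positivity, ellipticity, or integration by parts. The only point requiring care is the bookkeeping of the graded commutators, namely checking that at each step the term needed to kill $[\La,\de']\a$ has already been established, so that the chain of implications never stalls.
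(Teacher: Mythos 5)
Your proposal is correct and follows essentially the same route as the paper's own proof: the identical cascade $\La\a=\la\a=\bar{\la}\a=0 \Rightarrow \tau\a=\bar{\tau}\a=0 \Rightarrow \tau^{\ast}\a=\bar{\tau}^{\ast}\a=0 \Rightarrow \la^{\ast}\a=\bar{\la}^{\ast}\a=0$ via the commutator identities \eqref{E2}--\eqref{E3}, followed by the observation that each $\De_{\de}$ kills $\a$ once $\de$ and $\de^{\ast}$ do. The only cosmetic difference is that the paper writes out the explicit constants (e.g.\ $\tau^{\ast}\a=-\tfrac{i}{2}[\La,\bar{\tau}]\a$), which, as you note, are irrelevant to the vanishing argument.
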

\begin{proof}
We denote by $\a$ a primitive $k$-form in $\ker(\la)\cap\ker(\bar{\la})$. By the definition of $\tau,\bar{\tau}$, we have
$$\tau\a=[\La,\la]\a=0,\ \bar{\tau}\a=[\La,\bar{\la}]\a=0.$$ 
Therefore, 
\begin{equation*}
\begin{split}
	&\tau^{\ast}\a=-\frac{i}{2}[\La,\bar{\tau}]\a=0,\ \bar{\tau}^{\ast}\a=\frac{i}{2}[\La,\tau]\a=0,\\
	&\la^{\ast}\a=-\frac{1}{3}[\La,\tau^{\ast}]\a=0,\ \bar{\la}^{\ast}\a=-\frac{1}{3}[\La,\bar{\tau}^{\ast}]\a=0.\\
	\end{split}
	\end{equation*}
	Hence, $\a\in\ker(\De_{\tau}+\De_{\bar{\tau}}+\De_{\la}+\De_{\bar{\la}})\cap\Om^{k}$.
\end{proof}
Wilison considered the following positive definite self-adjoint elliptic operator of order two:
$$\square=\De_{\pa}+\De_{\bar{\pa}}+\De_{\tau}+\De_{\bar{\tau}}+\De_{\la}+\De_{\bar{\la}}.$$
Let $\square^{p,q}$ denote the restriction to $\Om^{p,q}$.
\begin{theorem}(\cite[Theorem 3.1]{Wil20})\label{T2}
Let $(M,J,\w)$ be a compact Hermitian manifold of complex dimension $n$. For any $0\leq k\leq 2n$, there is an orthogonal direct sum decomposition
$$\ker(\square)\cap\Om^{k}=\oplus_{p+q=k}\ker({\square}^{p,q})$$
For all $0\leq p,q\leq n$, the following dualities hold:\\
(1) (Complex conjugation). We have equalities 
$$\ker({\square}^{p,q})=\overline{\ker({\square}^{q,p})}.$$
(2) (Hodge duality). The Hodge $\ast$-operator induces isomorphisms
$$\ast:\ker({\square}^{p,q})\rightarrow \ker({\square}^{n-q,n-p}).$$
(3) (Serre duality). There are isomorphisms
$$\ker({\square}^{p,q})\cong\ker({\square}^{n-p,n-q}).$$
The operator $\{L,\La,H\}$ define a finite dimensional representation of $\mathfrak{sl}(2,\C)$ on $\ker({\square})$. Moreover, for every $0\leq p,q \leq n$, the maps
$$L^{n-p-q}:\ker({\square}^{p,q})\rightarrow\ker({\square}^{n-q,n-p})$$
are isomorphisms.
\end{theorem}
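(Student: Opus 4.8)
Here is how I would prove the statement.

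\textbf{Reduction to a system of first-order conditions.}
Each summand of $\square$ has the form $\De_{\de}=\de\de^{\ast}+\de^{\ast}\de$ with $\de\in\{\pa,\bar{\pa},\tau,\bar{\tau},\la,\bar{\la}\}$, so $\De_{\de}$ is non-negative and self-adjoint and $(\square\a,\a)=\sum_{\de}\big(\|\de\a\|^{2}+\|\de^{\ast}\a\|^{2}\big)$. Hence $\a\in\ker(\square)$ if and only if $\de\a=\de^{\ast}\a=0$ for every one of the six operators $\de$; in particular $\ker(\square)=\bigcap_{\de}\big(\ker(\de)\cap\ker(\de^{\ast})\big)$, and this is the identity I would use throughout. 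Since $\de$ and $\de^{\ast}$ have opposite bidegrees, every $\De_{\de}$ has bidegree $(0,0)$, so $\square$ preserves bidegree and restricts to $\square^{p,q}$ on $\Om^{p,q}$. As forms of distinct bidegree are pointwise orthogonal, this immediately gives the orthogonal splitting $\ker(\square)\cap\Om^{k}=\oplus_{p+q=k}\ker(\square^{p,q})$.

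\textbf{The three dualities.}
For conjugation, note that $\square$ is a real operator: $\overline{\De_{\pa}}=\De_{\bar{\pa}}$, $\overline{\De_{\tau}}=\De_{\bar{\tau}}$, $\overline{\De_{\la}}=\De_{\bar{\la}}$, whence $\overline{\square\a}=\square\bar{\a}$; since conjugation interchanges $\Om^{p,q}$ and $\Om^{q,p}$ we get $\ker(\square^{p,q})=\overline{\ker(\square^{q,p})}$. For Hodge duality the plan is to prove $\square\ast=\ast\square$. Using $\ker(\square)=\bigcap_{\de}(\ker\de\cap\ker\de^{\ast})$, I would check that conjugation by $\ast$ sends each operator in $\{\pa,\bar{\pa},\tau,\bar{\tau},\la,\bar{\la}\}$ to a scalar multiple of the adjoint of its complex-conjugate partner (the classical $\ast\pa\ast^{-1}=\pm\bar{\pa}^{\ast}$, together with the corresponding statements for the torsion operators deduced from $\La=\ast^{-1}L\ast$ and the definitions $\la=[\pa,L]$, $\tau=[\La,\la]$). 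Then $\ast$ carries the full system of kernel conditions in bidegree $(p,q)$ to that in bidegree $(n-q,n-p)$, so $\ast:\ker(\square^{p,q})\to\ker(\square^{n-q,n-p})$ is an isomorphism. Serre duality follows by composing: $\ker(\square^{p,q})=\overline{\ker(\square^{q,p})}\xrightarrow{\ \ast\ }\ker(\square^{n-p,n-q})$.

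\textbf{The $\mathfrak{sl}(2,\C)$-action and hard Lefschetz.}
The key claim is that $L$ and $\La$ preserve $\ker(\square)$; I would verify this for $L$ by direct commutator computation (then dualize via $\ast$ for $\La$). Given $\a\in\ker(\square)$, I compute $\de(L\a)$ and $\de^{\ast}(L\a)$ for all six $\de$ using \eqref{E1}--\eqref{E3}. For example $\pa(L\a)=[\pa,L]\a+L\pa\a=\la\a=0$, and $\bar{\pa}^{\ast}(L\a)=L\bar{\pa}^{\ast}\a-[L,\bar{\pa}^{\ast}]\a=i(\pa+\tau)\a=0$, using $\a\in\ker(\la)\cap\ker(\pa)\cap\ker(\tau)$; the remaining conditions drop out the same way from $[L,\tau]=3\la$, $[L,\la]=0$, $[L,\tau^{\ast}]=-2i\bar{\tau}$, $[L,\la^{\ast}]=-\tau^{\ast}$, and their conjugates, each right-hand side annihilating $\a$. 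Thus $L\a\in\ker(\square)$. Setting $H=[L,\La]$, which acts on $\Om^{k}$ by the scalar $k-n$, the triple $\{L,\La,H\}$ satisfies the $\mathfrak{sl}(2,\C)$ relations and preserves the finite-dimensional graded space $\ker(\square)$. The isomorphisms $L^{n-p-q}:\ker(\square^{p,q})\to\ker(\square^{n-q,n-p})$ are then the standard hard-Lefschetz output of finite-dimensional $\mathfrak{sl}(2,\C)$ representation theory applied to this module.

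\textbf{Main obstacle.}
I expect the genuine difficulty to lie in the Hodge-duality step, that is, in establishing $\ast\square=\ast\square$ honestly. While $\ast\pa\ast^{-1}=\pm\bar{\pa}^{\ast}$ is classical, tracking the $\ast$-conjugates of the zero-order torsion operators $\tau,\la$ and confirming that the six-operator system is genuinely closed under $\de\mapsto\ast\de\ast^{-1}$ requires the full Demailly identities and careful sign bookkeeping. By contrast, the $L$-invariance computation, though it uses all of \eqref{E1}--\eqref{E3}, is mechanical once those identities are available, and the final Lefschetz isomorphisms are then formal.
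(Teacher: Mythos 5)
The paper itself gives no proof of this statement: it is quoted verbatim from \cite[Theorem 3.1]{Wil20}, so the only available comparison is with Wilson's original argument, which your proposal reproduces in essentially the same form. Your proof is correct: non-negativity of each $\De_{\de}$ on a compact manifold reduces $\ker(\square)$ to the twelve first-order conditions $\de\a=\de^{\ast}\a=0$; realness of $\square$ gives conjugation duality; closure of the operator system $\{\pa,\bar{\pa},\tau,\bar{\tau},\la,\bar{\la}\}$ under $\ast$-conjugation (up to scalars, which is all that matters for kernel conditions) gives Hodge duality and, composed with conjugation, Serre duality; and the commutator identities (\ref{E1})--(\ref{E3}) show that $L$ (hence, by $\ast$-duality, $\La$) preserves $\ker(\square)$, after which the Lefschetz isomorphisms are the standard output of finite-dimensional $\mathfrak{sl}(2,\C)$ representation theory applied to the (elliptic, hence finite-dimensional) graded kernel.
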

The next result follows from some well established facts about $sl(2,\mathbb{C})$ representations.
\begin{corollary}(\cite[Corollary 3.2]{Wil20})\label{C2}
There is an orthogonal direct sum decomposition
$$\ker(\square^{p,q})=\bigoplus_{j\geq0}L^{j}(\ker(\square^{p-j,q-j}))_{prim}$$
where 
$$(\ker(\square^{r,s}))_{prim}:=\ker(\square^{r,s})\cap P^{r,s}.$$	
\end{corollary}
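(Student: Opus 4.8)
The plan is to regard $V:=\ker(\square)$ purely as a finite-dimensional $\mathfrak{sl}(2,\C)$-module and to read the decomposition off from the representation theory, refining it along the bigrading and then upgrading it to an orthogonal sum. The structural input is Theorem \ref{T2}: the triple $\{L,\La,H\}$ acts on the finite-dimensional space $V=\ker(\square)$ as an $\mathfrak{sl}(2,\C)$-representation, with $H=[L,\La]$ acting on $\ker(\square^{p,q})$ as the scalar $p+q-n$. Since $L$ has bidegree $(1,1)$ and $\La$ has bidegree $(-1,-1)$, both operators preserve $V$ and shift the bigrading by $(\pm 1,\pm 1)$; in particular $L^{j}$ carries $\ker(\square^{p-j,q-j})$ into $\ker(\square^{p,q})$, so each candidate summand $L^{j}(\ker(\square^{p-j,q-j}))_{prim}$ genuinely lies inside $\ker(\square^{p,q})$, and $\La$ respects the primitivity bookkeeping because it is of pure bidegree.

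Next I would invoke the Lefschetz decomposition for finite-dimensional $\mathfrak{sl}(2,\C)$-modules: $V$ splits into irreducible strings $\{v,Lv,\dots,L^{N}v\}$, each generated by a lowest-weight (that is, primitive, $\La v=0$) vector $v$ with $L^{N+1}v=0$, and $L^{j}$ is injective on the weight-$(\le -j)$ part of $P:=\ker(\La)\cap V$. Hence $V=\bigoplus_{j\ge 0}L^{j}(P)$. Because $\La$ has pure bidegree, $P=\bigoplus_{r,s}(\ker(\square^{r,s}))_{prim}$, and applying $L^{j}$ to the $(p-j,q-j)$ summand lands precisely in bidegree $(p,q)$. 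Collecting all contributions to a fixed bidegree $(p,q)$ then yields the asserted algebraic direct sum, the summands with $p+q-2j>n$ vanishing since there are no primitive forms of degree exceeding $n$.

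It remains to show the sum is orthogonal, and this is the one step where I expect genuine care is needed. The subtlety is that all the summands $L^{j}(\ker(\square^{p-j,q-j}))_{prim}$ share the same bidegree $(p,q)$, hence the same $H$-eigenvalue $p+q-n$, so one cannot simply separate distinct $H$-weights as in the usual argument across degrees. Instead I would use that $\La=L^{\ast}$ for the global inner product: for primitive $\a,\b$ contributing at levels $j<j'$, write $(L^{j}\a,L^{j'}\b)=(\La^{j'}L^{j}\a,\b)$ and push $\La$ through $L$ using the $\mathfrak{sl}(2,\C)$ relations. Since $\a$ is a lowest-weight vector, $\La^{j}L^{j}\a$ is a scalar multiple of $\a$, whence $\La^{j'}L^{j}\a$ is a multiple of $\La^{j'-j}\a=0$, so the two levels are orthogonal. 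Everything outside this adjunction step is the formal Lefschetz splitting applied to the representation already constructed in Theorem \ref{T2}, so the orthogonality inside a single bidegree is the only place that truly uses $\La=L^{\ast}$ together with primitivity.
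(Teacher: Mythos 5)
Your proposal is correct and is essentially the argument the paper has in mind: the paper offers no proof of its own, merely citing \cite[Corollary 3.2]{Wil20} with the remark that the result ``follows from some well established facts about $sl(2,\mathbb{C})$ representations,'' and your write-up is exactly that standard argument — the Lefschetz string decomposition applied to the $\mathfrak{sl}(2,\C)$-representation on $\ker(\square)$ from Theorem \ref{T2}, refined by bidegree. Your treatment of orthogonality within a fixed bidegree via the adjunction $\La=L^{\ast}$ and $\La^{j'}L^{j}\a=0$ for $j'>j$ is the right (and needed) supplement to the weight-space bookkeeping, so the proposal is complete.
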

\begin{proposition}\label{P5}
Let $(M,J,\w)$ be a compact complex manifold of $dim_{\C}=n$. Then
	$$\ker(\square^{n,0})=\ker(\De_{\bar{\pa}})\cap\Om^{n,0},$$ $$\ker(\square^{0,n})=\ker(\De_{\bar{\pa}})\cap\Om^{0,n}.$$
\end{proposition}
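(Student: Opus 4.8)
The plan is to establish each equality by two inclusions, carrying out $\ker(\square^{n,0})=\ker\De_{\bar\pa}\cap\Om^{n,0}$ in detail and then obtaining $\ker(\square^{0,n})=\ker\De_{\bar\pa}\cap\Om^{0,n}$ by the mirror argument. The inclusion $\subseteq$ is formal: each $\De_\delta=\delta\delta^{\ast}+\delta^{\ast}\delta$ is a nonnegative self-adjoint operator, so for $\a\in\Om^{n,0}$ one has $(\square\a,\a)=\sum_\delta\big(\|\delta\a\|^2+\|\delta^{\ast}\a\|^2\big)$ over $\delta\in\{\pa,\bar\pa,\tau,\bar\tau,\la,\bar\la\}$; hence $\square\a=0$ forces in particular $\bar\pa\a=\bar\pa^{\ast}\a=0$, giving $\ker(\square^{n,0})\subseteq\ker\De_{\bar\pa}\cap\Om^{n,0}$. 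The content is the reverse inclusion: I must show that $\a\in\Om^{n,0}$ with $\bar\pa\a=0$ (which, since $\bar\pa^{\ast}\a=0$ automatically by bidegree, is precisely $\De_{\bar\pa}\a=0$) lies in $\ker(\square^{n,0})$, i.e. that all six summands $\De_\delta\a$ vanish.

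First I would record the pure bidegree vanishings on $\Om^{n,0}$. Since $\pa\a\in\Om^{n+1,0}=0$, $\bar\pa^{\ast}\a\in\Om^{n,-1}=0$, $\La\a\in\Om^{n-1,-1}=0$ and $L\a\in\Om^{n+1,1}=0$, we get $\pa\a=\bar\pa^{\ast}\a=\La\a=L\a=0$; likewise $\la\a,\bar\la\a,\la^{\ast}\a,\bar\la^{\ast}\a$ and $\tau\a,\bar\tau^{\ast}\a$ all land in zero spaces, so already $\De_\la\a=\De_{\bar\la}\a=0$ and the only unknowns left are $\pa^{\ast}\a$, $\tau^{\ast}\a$ and $\bar\tau\a$.

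The crux is that the order-zero torsion term $\tau^{\ast}\a$, which is \emph{not} killed by bidegree (it lands in $\Om^{n-1,0}\ne 0$), nonetheless vanishes because of its commutator expression. From the third line of \eqref{E3}, $[L,\la^{\ast}]=-\tau^{\ast}$, so $\tau^{\ast}=\la^{\ast}L-L\la^{\ast}$; applying this to $\a$ and using $L\a=0$ together with $\la^{\ast}\a\in\Om^{n-2,-1}=0$ gives $\tau^{\ast}\a=0$, hence $\De_\tau\a=0$. With this in hand the two genuinely nontrivial Demailly identities \eqref{E1} close the argument: evaluating $[\La,\bar\pa]=-i(\pa^{\ast}+\tau^{\ast})$ on $\a$ and using $\La\a=0$ yields $\La\bar\pa\a=-i(\pa^{\ast}+\tau^{\ast})\a=-i\pa^{\ast}\a$, so the hypothesis $\bar\pa\a=0$ forces $\pa^{\ast}\a=0$ and thus $\De_\pa\a=0$; then evaluating $[L,\pa^{\ast}]=i(\bar\pa+\bar\tau)$ on $\a$ and using $L\a=0$ gives $L\pa^{\ast}\a=i(\bar\pa+\bar\tau)\a$, whence $\pa^{\ast}\a=0$ and $\bar\pa\a=0$ force $\bar\tau\a=0$ and $\De_{\bar\tau}\a=0$. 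Combined with $\De_{\bar\pa}\a=0$, all six Laplacians annihilate $\a$, so $\square\a=0$.

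Finally, the $(0,n)$ case is the mirror image: for $\a\in\Om^{0,n}$ the bidegree vanishings give $\bar\pa\a=\pa^{\ast}\a=\tau^{\ast}\a=\bar\tau\a=0$ and $\De_\la\a=\De_{\bar\la}\a=0$, while the two order-zero terms not killed by degree, namely $\tau\a$ and $\bar\tau^{\ast}\a$, vanish from the commutator relations $\tau=[\La,\la]$ and $\bar\tau^{\ast}=[\bar\la^{\ast},L]$ in \eqref{E3} (using $\la\a\in\Om^{2,n+1}=0$, $\La\a=0$ and $L\a=0$); the identity $[L,\bar\pa^{\ast}]=-i(\pa+\tau)$ then turns the hypothesis $\bar\pa^{\ast}\a=0$ into $\pa\a=0$, and the six Laplacians again vanish. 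One may also view this case as conjugate to the $(n,0)$ case via Theorem \ref{T2}(1). I expect the only point needing care throughout to be the bookkeeping: identifying exactly which torsion terms survive bidegree and expressing precisely those through $L,\La,\la,\bar\la$ so that \eqref{E3} can annihilate them; once this is arranged, the identities \eqref{E1} finish the proof, with no appeal to Lefschetz injectivity or to closedness of $\w$.
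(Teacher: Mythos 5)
Your proof is correct and follows essentially the same route as the paper: one inclusion comes from nonnegativity of the six summands, and the reverse inclusion from bidegree vanishing on $\Om^{n,0}$ combined with Demailly's identities, the decisive step $\pa^{\ast}\a=0$ being extracted from $[\La,\bar{\pa}]=-i(\pa^{\ast}+\tau^{\ast})$ exactly as in the paper's proof. The only cosmetic difference is bookkeeping: the paper disposes of the torsion terms wholesale by citing Lemma \ref{L3} (commutators with $\La$ from \eqref{E2}), while you rederive the needed vanishings $\tau^{\ast}\a=0$ and $\bar{\tau}\a=0$ from the commutators with $L$ in \eqref{E3} and \eqref{E1}, and you also write out the $(0,n)$ case explicitly rather than leaving it to conjugation.
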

\begin{proof}
We only proof the case of $(n,0)$-forms. We now denote by $\a^{n,0}$ a $(n,0)$-form in $\ker(\De_{\bar{\pa}})$. Therefore, $\bar{\pa}\a^{n,0}=0$. At first, we observe that $$\La\a^{n,0}=\la\a^{n,0}=\bar{\la}\a^{n,0}=0.$$
Following Lemma \ref{L3}, we have $$\Om^{n,0}=\ker(\De_{\tau}+\De_{\bar{\tau}}+\De_{\la}+\De_{\bar{\la}})\cap\Om^{n,0}.$$
It's easy to see $\pa\a^{n,0}=0$. Following the first identity on (\ref{E1}), we have
$$\pa^{\ast}\a^{n,0}=(i[\La,\bar{\pa}]-\tau^{\ast})\a^{n,0}=0.$$
Therefore, 
$$\ker(\De_{\bar{\pa}})\cap\Om^{n,0}\subset\ker(\square^{n,0})\subset\ker(\De_{\bar{\pa}})\cap\Om^{n,0},$$
that is, $\ker(\De_{\bar{\pa}})\cap\Om^{n,0}\cong\ker(\square^{n,0})$.
\end{proof}
\subsection{Morse-Novikov cohomology}
Let $M$ be a smooth manifold and $\theta$ a real valued closed one form on $M$. Define $d_{\theta}:\Om^{k}(M)\rightarrow:\Om^{k+1}(M)$ as $d_{\theta}\a=d\a+\theta\wedge\a$ for $\a\in\Om^{p}(M)$. Then we have a complex
$$\ldots\rightarrow\Om^{k-1}(M)\xrightarrow{d_{\theta}}\Om^{k}(M)\xrightarrow{d_{\theta}}\Om^{k+1}(M)\rightarrow\ldots$$
whose cohomology $H^{k}(M,\theta)=H^{k}(\Om^{\ast}(M),d_{\theta})$ is called the $k$-th Morse-Novikov cohomology group of $M$ with respect to $\theta$. $H^{k}(M,\theta)$ only depends on the de Rham cohomology class of $\theta$. This cohomology shares many properties with the ordinary de Rham cohomology (see \cite{LLMP03,Li77,OV09,Oti18}). 

We can also define an operator $d^{\ast}_{\theta}$ as the formal adjoint of $d_{\theta}$ with respect to metric $g$. Further, $\De_{\theta}=d_{\theta}d^{\ast}_{\theta}+d^{\ast}_{\theta}d_{\theta}$ is the corresponding Laplacian.  These operators are lower-order perturbations of the corresponding operators in the usual Hodge-de Rham theory. We denote by $\mathcal{H}^{k}(M,\theta)$ the space of $\De_{\theta}$-harmonic forms. The space $\mathcal{H}^{k}(M,\theta)$ is isomorphic to $H^{k}(M,\theta)$. Let $\theta^{\sharp}$ the dual vector field of $\theta$ defined by $g(\theta^{\sharp},\cdot)=\theta(\cdot)$.
\begin{lemma}(\cite[Lemma 4.4]{Chen20})\label{L1}
For any $k$-form $\a$, we have
$$\ast i_{\theta^{\sharp}}\a=(-1)^{k-1}\theta\wedge\ast\a.$$
\end{lemma}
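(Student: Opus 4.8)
The identity is pointwise, $\R$-linear in $\a$, and tensorial, so the plan is to fix the degree $k$ and work at a single point, reducing everything to the elementary fact that interior multiplication is the metric adjoint of exterior multiplication. Concretely, for $\theta^{\sharp}$ the metric dual of the Lee form $\theta$, I would start from the pointwise adjunction
$$\langle\theta\wedge\b,\gamma\rangle=\langle\b,i_{\theta^{\sharp}}\gamma\rangle,$$
valid for every $(k-1)$-form $\b$ and every $k$-form $\gamma$; this is just the standard statement that $i_{\theta^{\sharp}}=(\theta\wedge\,\cdot\,)^{\ast}$, which one checks immediately on a decomposable form and which requires no Hodge theory.

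Next I would translate the two pointwise inner products into wedge products through the defining relation $\mu\wedge\ast\nu=\langle\mu,\nu\rangle\,dV$. Applying it to each side of the adjunction (with $\gamma$ of degree $k$) gives
$$(\theta\wedge\b)\wedge\ast\gamma=\b\wedge\ast(i_{\theta^{\sharp}}\gamma).$$
On the left I would commute $\theta$ past the $(k-1)$-form $\b$, picking up the sign $(-1)^{k-1}$, so that the left-hand side becomes $(-1)^{k-1}\b\wedge(\theta\wedge\ast\gamma)$. Comparing, the equality
$$\b\wedge\ast(i_{\theta^{\sharp}}\gamma)=(-1)^{k-1}\b\wedge(\theta\wedge\ast\gamma)$$
holds for every $(k-1)$-form $\b$. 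Since the wedge pairing $\Om^{k-1}\times\Om^{2n-k+1}\to\Om^{2n}$ is nondegenerate at each point, cancelling $\b$ yields $\ast(i_{\theta^{\sharp}}\gamma)=(-1)^{k-1}\theta\wedge\ast\gamma$, which is the assertion after renaming $\gamma=\a$.

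I expect the only delicate point to be the sign bookkeeping: tracking the $(-1)^{k-1}$ that comes from moving $\theta$ across $\b$ and confirming that no hidden factor from $\ast\ast$ enters — it does not, precisely because the double Hodge star never appears in this argument. As a fully frame-based alternative that sidesteps both the adjunction and the nondegeneracy step, I could instead choose a local oriented orthonormal coframe $e^{1},\dots,e^{2n}$ adapted so that $\theta=|\theta|\,e^{1}$ (the identity being trivial where $\theta=0$), reduce to basis monomials $e^{I}$ by linearity, and split into the cases $1\in I$ and $1\notin I$; in each case $i_{\theta^{\sharp}}e^{I}$ and $\ast e^{I}$ are explicit, and the sign $(-1)^{k-1}$ falls out of the standard sign rule for $\ast$ on monomials. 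This route is elementary but more combinatorial, and it is exactly there, in the monomial sign-chasing, that I would expect to have to be careful.
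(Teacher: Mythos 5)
Your proof is correct: the adjunction between $\theta\wedge\cdot$ and $i_{\theta^{\sharp}}$, the relation $\mu\wedge\ast\nu=\langle\mu,\nu\rangle\,dV$, and the pointwise nondegeneracy of the wedge pairing $\Om^{k-1}\times\Om^{2n-k+1}\to\Om^{2n}$ do yield $\ast(i_{\theta^{\sharp}}\a)=(-1)^{k-1}\theta\wedge\ast\a$, with the sign coming only from commuting $\theta$ across the $(k-1)$-form $\b$ (for complex-valued forms both sides are $\C$-linear in $\a$, so proving the identity on real forms, as your argument implicitly does, suffices). There is in fact nothing in the paper to compare against: the lemma is stated without proof and simply quoted from \cite[Lemma 4.4]{Chen20}, so your derivation --- like the equally valid orthonormal-coframe alternative you sketch --- serves as a correct, self-contained verification.
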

We denote by 
$$\mathcal{S}^{k}(M,\theta)=\{\a\in\Om^{k}(M):\De_{d}\a=0, \theta\wedge\a=0, \theta\wedge\ast\a=0 \}$$
a subspace of $\mathcal{H}^{k}(M,\theta)$ on a closed manifold. We then have the following vanishing theorem.
\begin{theorem}\label{T3}
Let $M$ be a $n$-dimensional closed Riemannian manifold, $\theta$ a smooth $1$-form in $M$. If $\a$ is smooth $k$-form in $\mathcal{S}^{k}(M,\theta)$, $0\leq k\leq n$, then either $\theta=0$ or $\a=0$.
\end{theorem}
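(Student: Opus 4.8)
The conditions defining $\mathcal{S}^{k}(M,\theta)$ split naturally into an \emph{algebraic} part, namely $\theta\wedge\a=0$ and $\theta\wedge\ast\a=0$, and an \emph{analytic} part, namely $\De_{d}\a=0$. The plan is to extract a pointwise vanishing statement from the algebraic conditions alone, and then to upgrade it to the global dichotomy using harmonicity.

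First I would reformulate the second algebraic condition. Applying Lemma \ref{L1}, the hypothesis $\theta\wedge\ast\a=0$ is equivalent to $\ast\, i_{\theta^{\sharp}}\a=0$, hence to $i_{\theta^{\sharp}}\a=0$, since $\ast$ is an isomorphism (the sign $(-1)^{k-1}$ is irrelevant here). Thus at every point $\a$ satisfies both $\theta\wedge\a=0$ and $i_{\theta^{\sharp}}\a=0$. Next I would use that contraction is an antiderivation together with the identity $i_{\theta^{\sharp}}\theta=\theta(\theta^{\sharp})=|\theta|^{2}$: expanding $i_{\theta^{\sharp}}(\theta\wedge\a)=|\theta|^{2}\a-\theta\wedge i_{\theta^{\sharp}}\a$ and substituting the two vanishing relations yields the pointwise identity $|\theta|^{2}\a=0$. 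Consequently $\a$ vanishes identically on the open set $U=\{x\in M:\theta(x)\neq0\}$. Note that up to this point the harmonicity of $\a$ has not been used at all; the relation $|\theta|^{2}\a=0$ is purely algebraic, and it holds for every degree $k$ (for $k=0$ and $k=n$ one of the two wedge conditions is automatic, but the conclusion is the same).

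Finally, suppose $\theta\not\equiv0$, so that $U$ is a nonempty open subset of the (connected) manifold $M$. Here I would invoke unique continuation: since $\De_{d}$ is a second-order elliptic operator with scalar principal symbol, as exhibited by the Weitzenb\"ock formula $\De_{d}=\nabla^{\ast}\nabla+\mathcal{R}$ with $\mathcal{R}$ a zeroth-order curvature term, every $\De_{d}$-harmonic form enjoys Aronszajn's strong unique continuation property. As $\a$ vanishes on the nonempty open set $U$, it must vanish on all of $M$, giving $\a=0$. This proves the dichotomy.

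The algebraic steps are routine; the one genuinely analytic input, and the main point to get right, is the unique continuation step, which is precisely what converts the pointwise relation $|\theta|^{2}\a=0$ into the global statement. It is also what forces the connectedness hypothesis on $M$: on a disconnected manifold $\theta$ and $\a$ could be supported on different components, and then neither would vanish globally while $|\theta|^{2}\a=0$ still holds. I would therefore state the theorem for connected $M$ (or apply it componentwise).
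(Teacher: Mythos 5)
Your proof is correct and follows essentially the same route as the paper: Lemma \ref{L1} converts $\theta\wedge\ast\a=0$ into $i_{\theta^{\sharp}}\a=0$, the antiderivation identity yields the pointwise relation $|\theta|^{2}\a=0$, and unique continuation for the elliptic equation $\De_{d}\a=0$ turns this into the global dichotomy --- the paper merely organizes the last step as ``the non-vanishing set $Z^{c}(\a)$ is empty or dense'' (concluding $\theta=0$ in the dense case) instead of applying unique continuation on the open set $\{\theta\neq0\}$ as you do. Your remark on connectedness is well taken: the paper's empty-or-dense formulation tacitly assumes $M$ connected as well.
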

\begin{proof}
We also assume that $\a$ is a non-zero $\De_{d}$-harmonic $k$-form, i.e., $(d+d^{\ast})\a=0$. Following Lemma \ref{L1}, the equation $\theta\wedge\ast\a=0$ is equivalent to $i_{\theta^{\sharp}}\a=0$. Then we have
\begin{equation*}
0=i_{\theta^{\sharp}}(\theta\wedge\a)=|\theta|^{2}\a.\\
\end{equation*}
We denote by $Z^{c}(\a)$ the complement of the zero of $\a$.  By unique continuation of the elliptic equation $(d+d^{\ast})\a=0$, $Z^{c}(\a)$ is either empty or dense. Therefore, the vector field $\theta^{\sharp}$ is zero along $Z^{c}(\a)$. The set $Z^{c}(\a)$ is empty which is equivalent to $\a=0$ on $M$. If $Z^{c}(\a)$ is dense, then $\theta^{\sharp}=0$ almost everywhere on $M$. Since $\theta^{\sharp}$ is smooth, $\theta^{\sharp}=0$, i.e., $\theta=0$ on $M$. 
\end{proof}
\section{Harmonic symmetries on locally conformally K\"{a}hler manifold}
\subsection{Locally conformally K\"{a}hler manifold}
In this section, we first give the necessary definitions and properties of locally conformally K\"{a}hler (LCK) manifolds.
\begin{definition}
Let $(M,\w)$ be a complex Hermitian manifold of $\dim_{\C}M=n$, with
$$d\w=\theta\wedge\w,$$ 
where $\theta$ is a closed $1$-form. Then $M$ is called an LCK manifold.
\end{definition}
Therefore $\w$ is $(d-\theta)$-closed. The Morse–Novikov cohomology class $[\w]$ of $\w$ is called the Morse–Novikov class of $M$ (see \cite{LLMP03,GL84,OV09,Oti18}). This notion is similar to the notion of a K\"{a}hler class of a K\"{a}hler manifold. 
\begin{lemma}\label{L2}
	Let $(M,J,\theta)$ be a compact locally conformally K\"{a}hler manifold of $dim_{\C}=n$, $\a$ a smooth $k$-form in $\ker(\De_{\tau}+\De_{\bar{\tau}}+\De_{\la}+\De_{\bar{\la}})$. Then for any $|k-n|\geq2$, we have
	\begin{equation*}
	\theta\wedge\a=0,\ \theta\wedge\ast\a=0.
	\end{equation*}
\end{lemma}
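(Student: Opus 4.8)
The plan is to exploit the LCK condition $d\omega = \theta\wedge\omega$ to express the operators $\lambda$ and $\bar\lambda$ directly in terms of wedging with the Lee form, and then use the hypothesis $\alpha\in\ker(\Delta_\tau+\Delta_{\bar\tau}+\Delta_\lambda+\Delta_{\bar\lambda})$ together with the structure of primitive decompositions to force $\theta\wedge\alpha$ and $\theta\wedge\ast\alpha$ to vanish when $\alpha$ sits in a sufficiently extreme degree. First I would decompose $\theta$ into its $(1,0)$ and $(0,1)$ parts, writing $\theta = \theta^{1,0}+\theta^{0,1}$, and similarly recall that $\lambda = (\partial\omega)\wedge\cdot$ and $\bar\lambda = (\bar\partial\omega)\wedge\cdot$. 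Since $d\omega=\theta\wedge\omega$, splitting by bidegree gives $\partial\omega = \theta^{1,0}\wedge\omega$ and $\bar\partial\omega = \theta^{0,1}\wedge\omega$, so that $\lambda\alpha = \theta^{1,0}\wedge\omega\wedge\alpha = \theta^{1,0}\wedge L\alpha$ and $\bar\lambda\alpha = \theta^{0,1}\wedge L\alpha$. This is the key identification that turns the zero-order torsion operators into explicit wedge products with the Lee form composed with $L$.

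Next I would use that $\alpha\in\ker(\Delta_\tau+\Delta_{\bar\tau}+\Delta_\lambda+\Delta_{\bar\lambda})$ implies, by the positivity of each summand $\Delta_\delta=[\delta,\delta^\ast]$, that $\alpha$ lies in the common kernel $\ker(\lambda)\cap\ker(\bar\lambda)\cap\ker(\lambda^\ast)\cap\ker(\bar\lambda^\ast)\cap\ker(\tau)\cap\ker(\bar\tau)\cap\ker(\tau^\ast)\cap\ker(\bar\tau^\ast)$; in particular $\lambda\alpha=\bar\lambda\alpha=0$ and $\tau\alpha=\bar\tau\alpha=0$. Using the identifications above, $\lambda\alpha=0$ reads $\theta^{1,0}\wedge L\alpha=0$ and $\bar\lambda\alpha=0$ reads $\theta^{0,1}\wedge L\alpha=0$. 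The plan is then to feed these into the primitive (Lefschetz) decomposition of $\alpha$ and of $\theta\wedge\alpha$: when $|k-n|\ge 2$, the degree $k+1$ of $\theta\wedge\alpha$ is sufficiently far from the middle that the operator $L$ (respectively $\Lambda$) acts injectively on the relevant primitive components, so the vanishing of $\theta^{1,0}\wedge L\alpha$ and $\theta^{0,1}\wedge L\alpha$ can be promoted to the vanishing of $\theta^{1,0}\wedge\alpha$ and $\theta^{0,1}\wedge\alpha$ themselves, hence $\theta\wedge\alpha=0$.

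For the second conclusion $\theta\wedge\ast\alpha=0$, I would invoke the Hodge duality from Theorem \ref{T2}(2): the operator $\ast$ carries $\ker(\square^{p,q})$ to $\ker(\square^{n-q,n-p})$, and more to the point it commutes appropriately with the zero-order structure so that $\ast\alpha$ again lies in $\ker(\Delta_\tau+\Delta_{\bar\tau}+\Delta_\lambda+\Delta_{\bar\lambda})$. Since $\alpha$ has degree $k$ with $|k-n|\ge 2$, the dual form $\ast\alpha$ has degree $2n-k$ with $|(2n-k)-n|=|n-k|\ge 2$ as well, so the first part of the argument applies verbatim to $\ast\alpha$ and yields $\theta\wedge\ast\alpha=0$. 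Alternatively, Lemma \ref{L1} lets me rewrite $\theta\wedge\ast\alpha$ as $\pm\ast\, i_{\theta^\sharp}\alpha$, reducing the claim to $i_{\theta^\sharp}\alpha=0$, which follows from $\theta\wedge\alpha=0$ by a contraction identity once one accounts for the term $|\theta|^2\alpha$.

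The main obstacle I anticipate is the bookkeeping in the second paragraph: the condition $\theta^{1,0}\wedge L\alpha=0$ does not by itself give $\theta^{1,0}\wedge\alpha=0$, and one must use the degree restriction $|k-n|\ge 2$ precisely to guarantee injectivity of the relevant Lefschetz maps (Theorem \ref{T2}) on the primitive summands appearing in the decomposition of $\alpha$. Getting the bidegree accounting right—tracking how $\theta^{1,0}$ and $\theta^{0,1}$ interact with the primitive components of $\alpha$ and ensuring no cancellation hides a nonzero contribution—is where the genuine care is needed; the rest is a combination of the Demailly–Wilson identities \eqref{E1}–\eqref{E3} and the $\mathfrak{sl}(2,\mathbb{C})$ representation theory already recorded in Theorem \ref{T2} and Corollary \ref{C2}.
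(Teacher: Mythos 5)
Your argument for the first identity $\theta\wedge\a=0$ is essentially the paper's: the LCK relation gives $\la\a=\theta^{1,0}\wedge L\a$ and $\bar\la\a=\theta^{0,1}\wedge L\a$, the kernel hypothesis kills $\la\a$ and $\bar\la\a$ (each $\De_{\de}$ is non-negative, so the kernel of the sum is the common kernel), and hard Lefschetz injectivity of $L$ in degrees $\leq n-1$, applied to the $(k+1)$-form $\theta\wedge\a$ with $k\leq n-2$, promotes $L(\theta\wedge\a)=0$ to $\theta\wedge\a=0$, the case $k\geq n+2$ being handled by duality. This part is sound and matches the paper.

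The genuine gap is in the second identity $\theta\wedge\ast\a=0$, and neither of your two routes closes it. Route (a) is circular: if $k\leq n-2$ then $\ast\a$ has degree $2n-k\geq n+2$, so $\theta\wedge\ast\a$ has degree above $n$, where $L$ is no longer injective; the wedge conditions $\la(\ast\a)=\bar\la(\ast\a)=0$ are nearly vacuous in that range, and the useful kernel conditions become the adjoint ones. Put differently, the claim ``$\theta\wedge\ast\a=0$ for low-degree $\a$'' \emph{is} the first conclusion for the high-degree form $\ast\a$, which is exactly what remains unproven; dualizing back only returns you to the same statement. Route (b) asserts a false implication: $\theta\wedge\a=0$ does not give $i_{\theta^{\sharp}}\a=0$. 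From $i_{\theta^{\sharp}}(\theta\wedge\a)=|\theta|^{2}\a-\theta\wedge i_{\theta^{\sharp}}\a$ you only obtain $|\theta|^{2}\a=\theta\wedge i_{\theta^{\sharp}}\a$; for instance $\a=\theta\wedge\gamma$ with $i_{\theta^{\sharp}}\gamma=0$, $\gamma\neq0$, satisfies $\theta\wedge\a=0$ but $i_{\theta^{\sharp}}\a=|\theta|^{2}\gamma\neq0$. Indeed the two conditions $\theta\wedge\a=0$ and $i_{\theta^{\sharp}}\a=0$ together force $\a=0$ wherever $\theta\neq0$ --- this is precisely the mechanism of Theorem \ref{T3} exploited afterwards in Theorem \ref{T4}, so if one implied the other the lemma would collapse into the vanishing theorem. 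The paper's missing idea is a degree-lowering factorization: for $k\leq n-2$ write $\ast\a=L^{n-k}\b$ for a $k$-form $\b$ (Lefschetz bijectivity), use that $\ast$ preserves $\ker(\De_{\tau}+\De_{\bar{\tau}}+\De_{\la}+\De_{\bar{\la}})$ and that this operator commutes with $L$ there, so $0=(\De_{\tau}+\De_{\bar{\tau}}+\De_{\la}+\De_{\bar{\la}})(\ast\a)=L^{n-k}(\De_{\tau}+\De_{\bar{\tau}}+\De_{\la}+\De_{\bar{\la}})\b$ forces $\b$ itself into the kernel; then the first-part argument applied to $\b$ gives $\theta\wedge\b=0$, whence $\theta\wedge\ast\a=L^{n-k}(\theta\wedge\b)=0$.
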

\begin{proof}
	By the definitions of $\la$ and $\bar{\la}$, we obtain that $$\la+\bar{\la}=(\pa\w+\bar{\pa}\w)\wedge(\cdot)=\w\wedge\theta\wedge(\cdot).$$
	For simply, we let $k\leq n-2$. The case $k\geq n+2$ follows by the
	Poincare duality as the operator $\ast:\Om^{k} \rightarrow\Om^{2n-k}$ commutes with $\ker(\De_{\tau}+\De_{\bar{\tau}}+\De_{\la}+\De_{\bar{\la}})$. Since $\a\in\ker(\De_{\tau}+\De_{\bar{\tau}}+\De_{\la}+\De_{\bar{\la}})\cap\Om^{k}$, $\a$ satisfies
	$$\la\a=\bar{\la}\a=0,$$
	Therefore,  we get
	$$\w\wedge(\theta\wedge\a)=0.$$
	We then have $$L^{n-k-1}(\theta\wedge\a)=0.$$
	Since the map $L^{n-k-1}:\Om^{k+1}\rightarrow\Om^{2n-k-1}$ is bijective for $k+1\leq n-1$ (see \cite{Huy06}), we get
	$$\theta\wedge\a=0.$$
	For any $k$-form $\a$ on $M$, ($k\leq n-2$), there exists a $k$-form $\b$ such that as
	$$\ast\a=L^{n-k}\b,$$
Noting that  $(\De_{\tau}+\De_{\bar{\tau}}+\De_{\la}+\De_{\bar{\la}})(\ast\a)=0$ and $$[L,\De_{\tau}+\De_{\bar{\tau}}+\De_{\la}+\De_{\bar{\la}}]|_{\ker(\De_{\tau}+\De_{\bar{\tau}}+\De_{\la}+\De_{\bar{\la}})}=0.$$
Therefore, we have
	$$0=(\De_{\tau}+\De_{\bar{\tau}}+\De_{\la}+\De_{\bar{\la}})(\ast\a)=L^{n-k}(\De_{\tau}+\De_{\bar{\tau}}+\De_{\la}+\De_{\bar{\la}})\b$$
	Since the map $L^{n-k}:\Om^{k}\rightarrow\Om^{2n-k}$ is bijective for $k\leq n$ (see \cite{Huy06}), 
	$$(\De_{\tau}+\De_{\bar{\tau}}+\De_{\la}+\De_{\bar{\la}})\b=0.$$
By a similar way, we also have 
	$$\theta\wedge\b=0.$$
	Hence
	$$\theta\wedge\ast\a=L^{n-k}(\theta\wedge\b)=0.$$
\end{proof}	
\begin{corollary}\label{C3}	
Let $(M,J,\theta)$ be a compact locally conformally K\"{a}hler manifold of $dim_{\C}=n$. If $\theta$ is $\De_{d}$-harmonic, then for any $|k-n|\geq2$, we have
$$\ker(\De_{\tau}+\De_{\bar{\tau}}+\De_{\la}+\De_{\bar{\la}})\cap\Om^{k}=\{0\}.$$
In particular, if $(M,J,\theta)$ is a Vaisman manifold, i.e., $\na\theta=0$, then
$\ker(\De_{\tau}+\De_{\bar{\tau}}+\De_{\la}+\De_{\bar{\la}})\cap\Om^{k}=\{0\}$.
\end{corollary}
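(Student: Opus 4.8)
The plan is to reduce the statement to the unique continuation mechanism already used in Theorem \ref{T3}, but with the roles of $\a$ and $\theta$ interchanged: there the test form is the harmonic one and $\theta$ is free, whereas here $\theta$ is the harmonic object to which unique continuation is applied.

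First I would fix $\a\in\ker(\De_{\tau}+\De_{\bar{\tau}}+\De_{\la}+\De_{\bar{\la}})\cap\Om^{k}$ with $|k-n|\geq2$. Since $(M,J,\theta)$ is LCK, Lemma \ref{L2} applies verbatim and supplies the two pointwise relations
$$\theta\wedge\a=0,\qquad \theta\wedge\ast\a=0.$$
By Lemma \ref{L1} the second is equivalent to $i_{\theta^{\sharp}}\a=0$. Contracting the first relation with $\theta^{\sharp}$, using that $i_{\theta^{\sharp}}$ is an antiderivation together with $i_{\theta^{\sharp}}\theta=\theta(\theta^{\sharp})=|\theta|^{2}$, I obtain
$$0=i_{\theta^{\sharp}}(\theta\wedge\a)=|\theta|^{2}\a-\theta\wedge i_{\theta^{\sharp}}\a=|\theta|^{2}\a,$$
so $|\theta|^{2}\a\equiv0$ on $M$. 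This is precisely the identity appearing in the proof of Theorem \ref{T3}.

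The decisive step, and the only point where the hypothesis on $\theta$ is used, is to pass from $|\theta|^{2}\a=0$ to $\a\equiv0$. Because $M$ is non-K\"{a}hlerian, the Lee form $\theta$ is not identically zero, and by assumption it is $\De_{d}$-harmonic, hence a solution of the elliptic equation $(d+d^{\ast})\theta=0$. Unique continuation then forces the zero set $Z(\theta)$ to have empty interior, so its complement $Z^{c}(\theta)$ is dense in $M$. On $Z^{c}(\theta)$ we have $|\theta|^{2}>0$, whence $\a=0$ there; since $\a$ is smooth this gives $\a\equiv0$, proving $\ker(\De_{\tau}+\De_{\bar{\tau}}+\De_{\la}+\De_{\bar{\la}})\cap\Om^{k}=\{0\}$.

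For the Vaisman case I would observe that $\na\theta=0$ makes the hypothesis automatic and in fact bypasses the unique continuation argument: a parallel one-form is closed and coclosed, so $\De_{d}\theta=0$, while $X|\theta|^{2}=2g(\na_{X}\theta,\theta)=0$ shows that $|\theta|$ is a nonzero constant; then $|\theta|^{2}\a=0$ immediately yields $\a=0$. The main obstacle here is conceptual rather than computational: one must recognize that the symmetric pair of vanishing conditions produced by Lemma \ref{L2} allows the unique continuation of Theorem \ref{T3} to be run on $\theta$ in place of $\a$. Everything else is the antiderivation identity and a density argument.
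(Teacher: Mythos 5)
Your proposal is correct and follows essentially the same route as the paper's own proof: Lemma \ref{L2} supplies $\theta\wedge\a=0$ and $\theta\wedge\ast\a=0$, contraction with $\theta^{\sharp}$ yields $|\theta|^{2}\a=0$, and unique continuation for the elliptic equation $(d+d^{\ast})\theta=0$ together with the standing non-K\"{a}hlerian assumption (so $\theta\not\equiv0$) forces $\a\equiv0$. Your two small additions are both sound improvements in rigor: you make explicit the antiderivation step $i_{\theta^{\sharp}}(\theta\wedge\a)=|\theta|^{2}\a-\theta\wedge i_{\theta^{\sharp}}\a$ with $i_{\theta^{\sharp}}\a=0$ coming from Lemma \ref{L1} (the paper leaves this implicit), and you observe that in the Vaisman case $\na\theta=0$ makes $|\theta|$ a nonzero constant, so the conclusion follows from $|\theta|^{2}\a=0$ without invoking unique continuation at all.
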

\begin{proof}
We denote by $\a$  a $k$-form in $\ker(\De_{\tau}+\De_{\bar{\tau}}+\De_{\la}+\De_{\bar{\la}})\cap\Om^{k}$. By Lemma \ref{L2}, we have $\theta\wedge\a=0$. Therefore, we have
$$i_{\theta^{\sharp}}(\theta\wedge\a)=\a|\theta|^{2}=0.$$
By unique continuation of the elliptic equation $(d+d^{\ast})\theta=0$, $Z^{c}(\theta)$ is dense or empty. Since $\theta$ is non-zero, $Z^{c}(\theta)$ is dense. Therefore, the $k$-form $\a$ is zero along $Z^{c}(\theta)$, i.e., $\a=0$ almost everywhere on $M$. Since $\a$ is smooth, $\a=0$ all over $M$. 
\end{proof}
\begin{remark}
	The Lee form $\theta$ is co-closed with respect to $g$ if only if the metric $g$ is Gauduchon, i.e., $\pa\bar{\pa}\w^{n-1}=0$ (\cite[pp. 502]{Gau84}).  A classical result of Gauduchon \cite{Gau77} states that every Hermitian metric is conformal to a Gauduchon metric, which is unique up to rescaling when $n\geq2$.
\end{remark}
The Hermitian metric on general LCK manifold could not always Gauduchon. But when we consider the space $\mathcal{H}_{\square}^{k}$, we can also prove the following vanishing theorem.
\begin{theorem}\label{T4}
	Let $(M,J,\theta)$ be a compact locally conformally K\"{a}hler manifold of $dim_{\C}=n$. Then for any $|k-n|\geq 2$, we have
	$$\mathcal{H}_{\square}^{k}=\{0\}.$$
\end{theorem}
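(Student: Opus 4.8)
The plan is to exploit that $\square$ is a sum of six non-negative self-adjoint operators, so that lying in $\ker\square$ forces each summand to annihilate $\alpha$ separately. This will let me feed one and the same form $\alpha$ both into Lemma \ref{L2} (through the torsion part $\Delta_\tau+\Delta_{\bar\tau}+\Delta_\lambda+\Delta_{\bar\lambda}$) and into the vanishing result Theorem \ref{T3} (through the $\partial$- and $\bar\partial$-parts).

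First I would fix $\alpha\in\mathcal{H}^k_\square$ with $|k-n|\geq2$ and pair the equation $\square\alpha=0$ against $\alpha$. Since each $\Delta_\delta=\delta\delta^\ast+\delta^\ast\delta$ satisfies $(\Delta_\delta\alpha,\alpha)=\|\delta\alpha\|^2+\|\delta^\ast\alpha\|^2\geq0$, the identity $(\square\alpha,\alpha)=\sum_\delta(\Delta_\delta\alpha,\alpha)=0$ forces every summand to vanish. In particular $\alpha\in\ker(\Delta_\tau+\Delta_{\bar\tau}+\Delta_\lambda+\Delta_{\bar\lambda})$, and moreover $\Delta_\partial\alpha=\Delta_{\bar\partial}\alpha=0$, i.e. $\partial\alpha=\partial^\ast\alpha=\bar\partial\alpha=\bar\partial^\ast\alpha=0$.

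Next I would note that $d$-harmonicity is then automatic: from $d=\partial+\bar\partial$ and $d^\ast=\partial^\ast+\bar\partial^\ast$ the four vanishings give $d\alpha=0$ and $d^\ast\alpha=0$, hence $\Delta_d\alpha=0$. On the torsion side, because $\alpha\in\ker(\Delta_\tau+\Delta_{\bar\tau}+\Delta_\lambda+\Delta_{\bar\lambda})\cap\Omega^k$ with $|k-n|\geq2$, Lemma \ref{L2} supplies $\theta\wedge\alpha=0$ and $\theta\wedge\ast\alpha=0$. These three facts together say precisely that $\alpha\in\mathcal{S}^k(M,\theta)$.

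Finally I would invoke Theorem \ref{T3}, by which any element of $\mathcal{S}^k(M,\theta)$ vanishes unless $\theta=0$. Since $M$ is assumed LCK and non-K\"ahlerian, the Lee form cannot vanish identically (otherwise $d\omega=\theta\wedge\omega=0$ would exhibit $\omega$ as a K\"ahler form), so $\theta\neq0$ and hence $\alpha=0$. I do not anticipate a serious obstacle: the analytic substance has been front-loaded into Lemma \ref{L2} (the degree restriction enters there via bijectivity of the Lefschetz maps $L^{n-k-1}$ and $L^{n-k}$) and into Theorem \ref{T3} (the unique-continuation argument). The only steps needing care are the elementary passage from simultaneous $\partial$- and $\bar\partial$-harmonicity to $d$-harmonicity, and the justification that $\theta\neq0$ from the non-K\"ahler hypothesis.
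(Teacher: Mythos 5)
Your proposal is correct and takes essentially the same route as the paper: pass from $\ker(\square)$ to $\ker(\Delta_{\tau}+\Delta_{\bar{\tau}}+\Delta_{\lambda}+\Delta_{\bar{\lambda}})$, apply Lemma \ref{L2} to obtain $\theta\wedge\alpha=\theta\wedge\ast\alpha=0$, and conclude via Theorem \ref{T3} together with the standing non-K\"{a}hlerian assumption forcing $\theta\neq0$. If anything, you are more careful than the paper, which leaves implicit both the positivity argument splitting $\ker(\square)$ into the kernels of the individual summands and the verification that $\Delta_{d}\alpha=0$ (needed for $\alpha\in\mathcal{S}^{k}(M,\theta)$ before Theorem \ref{T3} can be invoked).
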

\begin{proof}
	For any $k$-form  $\a\in\ker(\square)$ on $M$, ($|k-n|\geq2$), following Lemma \ref{L2}, we get 
	$$\theta\wedge\a=\theta\wedge\ast\a=0.$$ 
	Hence following Theorem \ref{T3}, we get either $\theta=0$ or $\a=0$. Since $M$ is non-K\"{a}hlerian, there are some points in $M$ such that $\theta\neq0$. Therefore, $\a\equiv0$, i.e., $\ker({\square})\cap\Om^{k}=\{0\}$.
\end{proof}
We now study the relationship between $\ker(\De_{\tau}+\De_{\bar{\tau}}+\De_{\la}+\De_{\bar{\la}})\cap\Om^{n-1}$ and $\ker(\De_{\tau}+\De_{\bar{\tau}}+\De_{\la}+\De_{\bar{\la}})\cap\Om^{n}$. The following result is very important in the study of Vaisman manifold.  
\begin{proposition}\label{P2}
Let $(M,J,\theta)$ be a compact locally conformally K\"{a}hler manifold of $dim_{\C}=n$. If a $(n-1)$-form $\a\in\ker(\De_{\tau}+\De_{\bar{\tau}}+\De_{\la}+\De_{\bar{\la}})$, then we have
\begin{equation*}
(\De_{\tau}+\De_{\bar{\tau}}+\De_{\la}+\De_{\bar{\la}})(\theta\wedge\a)=0,
\end{equation*}
\begin{equation*}
\theta\wedge\ast\a=0.
\end{equation*}
\end{proposition}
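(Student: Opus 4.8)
The plan is to prove the two assertions separately, in both cases exploiting the LCK identity
$(\la+\bar{\la})(\cdot)=(\pa\w+\bar{\pa}\w)\wedge(\cdot)=\w\wedge\theta\wedge(\cdot)=L(\theta\wedge\cdot)$
together with the fact that any $\a\in\ker(\De_{\tau}+\De_{\bar{\tau}}+\De_{\la}+\De_{\bar{\la}})$ satisfies $\la\a=\bar{\la}\a=\tau\a=\cdots=0$ (and all their adjoints), since each $\De_{\de}=\de\de^{\ast}+\de^{\ast}\de$ is nonnegative and so a vanishing sum forces each summand to kill $\a$. First I would record the basic consequence: from $\la\a=\bar{\la}\a=0$ and the LCK identity, $L(\theta\wedge\a)=(\la+\bar{\la})\a=0$, so the $n$-form $\theta\wedge\a$ is primitive, because for $n$-forms the condition $\La(\cdot)=0$ is equivalent to $L(\cdot)=0$; that is, $\theta\wedge\a\in P^{n}$.

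For the first identity I would then check that $\theta\wedge\a\in\ker(\la)\cap\ker(\bar{\la})\cap P^{n}$ and quote Lemma \ref{L3}. Membership in $P^{n}$ is the primitivity just obtained. For $\la(\theta\wedge\a)=\pa\w\wedge\theta\wedge\a$, note that $\pa\w$ has degree three and $\theta$ degree one, so moving $\theta$ to the front costs a sign $(-1)^{3}=-1$, giving $\la(\theta\wedge\a)=-\theta\wedge(\pa\w\wedge\a)=-\theta\wedge\la\a=0$; the identical computation gives $\bar{\la}(\theta\wedge\a)=0$. Lemma \ref{L3} then yields directly $(\De_{\tau}+\De_{\bar{\tau}}+\De_{\la}+\De_{\bar{\la}})(\theta\wedge\a)=0$.

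For the second identity, recall (as used in the proof of Lemma \ref{L2}) that the Hodge star preserves $\ker(\De_{\tau}+\De_{\bar{\tau}}+\De_{\la}+\De_{\bar{\la}})$, so $\ast\a\in\ker(\De_{\tau}+\De_{\bar{\tau}}+\De_{\la}+\De_{\bar{\la}})\cap\Om^{n+1}$, and hence $(\la+\bar{\la})(\ast\a)=L(\theta\wedge\ast\a)=0$. The hard part is that $\theta\wedge\ast\a$ has degree $n+2$, where $L$ is no longer injective, so one cannot read off $\theta\wedge\ast\a=0$ as was done for $|k-n|\geq2$ in Lemma \ref{L2}. I would circumvent this by descending through the hard Lefschetz isomorphism $L:\Om^{n-1}\to\Om^{n+1}$: write $\ast\a=L\b$ with $\b\in\Om^{n-1}$. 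Using that $\De_{\tau}+\De_{\bar{\tau}}+3\De_{\la}+3\De_{\bar{\la}}$ commutes with $L$ on all forms and has the same kernel as $\De_{\tau}+\De_{\bar{\tau}}+\De_{\la}+\De_{\bar{\la}}$, we get $0=(\De_{\tau}+\De_{\bar{\tau}}+3\De_{\la}+3\De_{\bar{\la}})(L\b)=L(\De_{\tau}+\De_{\bar{\tau}}+3\De_{\la}+3\De_{\bar{\la}})\b$, and injectivity of $L$ on $(n-1)$-forms forces $\b\in\ker(\De_{\tau}+\De_{\bar{\tau}}+\De_{\la}+\De_{\bar{\la}})$, whence $\la\b=\bar{\la}\b=0$. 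Finally, since $\theta$ is odd and $\w$ even, $\theta\wedge\ast\a=\theta\wedge L\b=\w\wedge\theta\wedge\b=(\la+\bar{\la})\b=0$, which is the claimed vanishing.

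I expect the genuinely delicate step to be the second identity: the degree gap $|k-n|\geq2$ of Lemma \ref{L2} reappears here as the failure of injectivity of $L$ above the middle degree, and the whole argument hinges on pulling $\ast\a$ back through the Lefschetz isomorphism before wedging with $\theta$. The first identity, by contrast, is a short sign computation feeding into Lemma \ref{L3}.
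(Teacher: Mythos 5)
Your proposal is correct and follows essentially the same route as the paper: for the first identity, establishing that $\theta\wedge\a$ is primitive and killed by $\la,\bar{\la}$ and then invoking Lemma \ref{L3}; for the second, descending through the Lefschetz isomorphism $\ast\a=L\b$ with $\b\in\Om^{n-1}$, showing $\b$ lies in the kernel, and concluding $\theta\wedge\ast\a=\w\wedge\theta\wedge\b=(\la+\bar{\la})\b=0$. Your use of the operator $\De_{\tau}+\De_{\bar{\tau}}+3\De_{\la}+3\De_{\bar{\la}}$, which commutes with $L$ on all forms, is in fact a slightly cleaner justification of the commutation step than the paper's appeal to the commutator restricted to the kernel.
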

\begin{proof}
In order to get $(\De_{\tau}+\De_{\bar{\tau}}+\De_{\la}+\De_{\bar{\la}})(\theta\wedge\a)=0$, by Lemma \ref{L3}, we only need to prove that  $\La(\theta\wedge\a)=0$, $\la(\theta\wedge\a)=\bar{\la}(\theta\wedge\a)=0$. Since $\a\in\ker(\De_{\tau}+\De_{\bar{\tau}}+\De_{\la}+\De_{\bar{\la}})$, we have $$(\la+\bar{\la})\a=\w\wedge(\theta\wedge\a)=0,$$ 
i.e., $\La(\theta\wedge\a)=0$. We also get
\begin{equation*}
\begin{split}
&\la(\theta\wedge\a)=\theta^{1,0}\wedge\w\wedge(\theta\wedge\a)=0,\\
&\bar{\la}(\theta\wedge\a)=\theta^{0,1}\wedge\w\wedge(\theta\wedge\a)=0.
\end{split}
\end{equation*}
For any $(n-1)$-form $\a$ on $M$, there exists a $(n-1)$-form $\b$ such that
$$\ast\a=L\b.$$ 
By a similar way in Lemma \ref{L2}, we get
$$(\De_{\tau}+\De_{\bar{\tau}}+\De_{\la}+\De_{\bar{\la}})\b=0.$$
We then have 
$$\la(\b)=\w\wedge\theta^{1,0}\wedge\b=0,\  \bar{\la}(\b)=\w\wedge\theta^{0,1}\wedge\b=0.$$
Hence
$$0=(\la+\bar{\la})\b=\w\wedge(\theta\wedge\b)=\theta\wedge\ast\a.$$
\end{proof}
The next result is in regards to the $k$-forms $(k=n,n-1)$ in $ker(\square)$.
\begin{theorem}\label{P4}
Let $(M,J,\theta)$ be a compact locally conformally K\"{a}hler manifold of $dim_{\C}=n$. We then have
\begin{equation*}
\ker\De_{\bar{\pa}}\cap P^{k,n-1-k}\cap\ker(i_{\theta^{\sharp}})\cong\ker(\square^{k,n-1-k}).
\end{equation*}
$$\ker\De_{\bar{\pa}}\cap P^{k,n-k}\cong \ker(\square^{k,n-k}).$$
\end{theorem}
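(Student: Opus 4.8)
The goal is to establish two isomorphisms relating $\De_{\bar\pa}$-harmonic primitive forms to the kernel of $\square$ in bidegrees $(k,n-1-k)$ and $(k,n-k)$. The plan is to prove the two inclusions in each case, using the same strategy that proved Proposition \ref{P5}: namely, to show that on these specific forms the higher-order part of $\square$ collapses so that membership in $\ker(\square)$ becomes equivalent to $\bar\pa$-harmonicity together with vanishing of the torsion summands. The natural ordering is to handle the $(k,n-k)$ case first, since it is the cleaner of the two, and then bootstrap to the $(k,n-1-k)$ case using Proposition \ref{P2}.

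For the $(k,n-k)$ statement, first I would take $\a\in\ker(\square^{k,n-k})$. By Corollary \ref{C2} any such form admits a Lefschetz decomposition into $L^{j}$ applied to primitive $\square$-harmonic forms; but a $(k,n-k)$-form has total degree $n+(k-(n-k))$-type content forcing $\La\a=0$ automatically, so one checks that a form of bidegree $(k,n-k)$ is itself primitive (indeed $L^{n-(k+(n-k))+1}=L^{1-k+\dots}$, i.e.\ the bidegree $(k,n-k)$ sits at the top of its Lefschetz string). The key observation to record is that $\La\a=0$, and hence $\la\a=\bar\la\a=0$ follow as in Proposition \ref{P5}, so by Lemma \ref{L3} the whole order-zero block $\De_{\tau}+\De_{\bar\tau}+\De_{\la}+\De_{\bar\la}$ annihilates $\a$. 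This reduces $\square\a=0$ to $(\De_{\pa}+\De_{\bar\pa})\a=0$, i.e.\ $\a$ is both $\pa$- and $\bar\pa$-harmonic; in particular $\ker(\square^{k,n-k})\subseteq\ker\De_{\bar\pa}\cap\Om^{k,n-k}$. For the reverse inclusion I would start from $\bar\pa\a=\bar\pa^{\ast}\a=0$, use the primitivity to kill the torsion terms again via Lemma \ref{L3}, and then deploy the Demailly identities (\ref{E1}) to convert $\pa$-harmonicity into $\bar\pa$-harmonicity plus torsion terms that have already been shown to vanish — exactly the mechanism used for $\pa^{\ast}\a^{n,0}$ in Proposition \ref{P5}.

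For the $(k,n-1-k)$ statement the extra hypotheses $\a\in P^{k,n-1-k}$ and $i_{\theta^{\sharp}}\a=0$ enter. Here primitivity is no longer automatic (the total degree is $n-1$, so $\La\a=0$ must be imposed), but once assumed, Lemma \ref{L3} again gives vanishing of all four torsion Laplacians, collapsing $\square\a=0$ to $(\De_{\pa}+\De_{\bar\pa})\a=0$. The role of $i_{\theta^{\sharp}}\a=0$ is to control the interaction with the Lee form: by Lemma \ref{L1} this is equivalent to $\theta\wedge\ast\a=0$, and together with the primitivity this should force $\theta\wedge\a=0$ as well (cf.\ the degree count in Lemma \ref{L2} and the wedge-with-$\w$ computations in Proposition \ref{P2}), which is what makes the $\pa$-harmonic and $\bar\pa$-harmonic conditions coincide on these forms. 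I would again prove both inclusions: $\ker(\square^{k,n-1-k})\subseteq\ker\De_{\bar\pa}\cap P^{k,n-1-k}\cap\ker(i_{\theta^{\sharp}})$ requires showing that a $\square$-harmonic $(k,n-1-k)$-form is automatically primitive and automatically killed by $i_{\theta^{\sharp}}$, for which Proposition \ref{P2} (applied in the form $\theta\wedge\ast\a=0$) is the essential input; the reverse inclusion follows the same torsion-collapse-plus-Demailly-identity pattern as above.

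The main obstacle I expect is the bookkeeping in the $(k,n-1-k)$ case showing that $\ker(\square^{k,n-1-k})$ really lands inside the primitive forms annihilated by $i_{\theta^{\sharp}}$, rather than merely the other way around. Unlike the top-degree $(k,n-k)$ case where primitivity is forced by type, here one must rule out a nonzero $L\b$-component in the Lefschetz decomposition of a harmonic form and simultaneously control the Lee-form contraction; this is precisely where Proposition \ref{P2} — which packages the compatibility of $\theta\wedge(\cdot)$ with the torsion operators — does the heavy lifting, and verifying that its conclusions suffice to pin down both conditions at once is the delicate point.
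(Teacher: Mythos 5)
Your plan for the inclusions $\ker\De_{\bar{\pa}}\cap P^{k,n-k}\subset\ker(\square^{k,n-k})$ and $\ker\De_{\bar{\pa}}\cap P^{k,n-1-k}\cap\ker(i_{\theta^{\sharp}})\subset\ker(\square^{k,n-1-k})$ matches the paper's: primitivity together with the LCK relations $\pa\w=\theta^{1,0}\wedge\w$, $\bar{\pa}\w=\theta^{0,1}\wedge\w$ gives $\la\a=\bar{\la}\a=0$, Lemma \ref{L3} kills the order-zero Laplacians, and the Demailly identities (in the paper, via $\De_{\pa}+[\pa,\tau^{\ast}]=\De_{\bar{\pa}}+[\bar{\pa},\bar{\tau}^{\ast}]$) transfer $\bar{\pa}$-harmonicity to $\pa$-harmonicity. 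But your treatment of the reverse inclusions has a genuine gap. You assert that a form of bidegree $(k,n-k)$ is "itself primitive" because it "sits at the top of its Lefschetz string"; this is false. Middle-degree forms are not primitive by type: on a surface ($n=2$) the fundamental form $\w$ has bidegree $(1,1)$ and $\La\w=n\neq0$. What is true is that a form \emph{already known to be primitive} of degree $n$ satisfies $L\a=0$; primitivity itself must be proved. The paper proves it by combining Corollary \ref{C2} (the decomposition $\a=\a_{0}+\sum_{j\geq1}L^{j}\a_{j}$ with each $\a_{j}$ a primitive element of $\ker(\square)$ in degree $n-2j$, resp.\ $n-1-2j$) with the vanishing Theorem \ref{T4}: since these degrees differ from $n$ by at least $2$ when $j\geq1$, all lower Lefschetz components vanish. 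This is exactly where the LCK hypothesis enters decisively — on a K\"{a}hler manifold the statement fails — and your proposal never invokes Theorem \ref{T4}.

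The same gap recurs in the $(k,n-1-k)$ case: you assign the job of ruling out the $L\b$-components to Proposition \ref{P2}, but that proposition says nothing about primitivity; its role in the paper is only to produce $\theta\wedge\ast\a=0$ (equivalently $i_{\theta^{\sharp}}\a=0$) once the torsion Laplacians annihilate $\a$. Primitivity again comes from Corollary \ref{C2} plus Theorem \ref{T4}. A smaller inaccuracy: you expect primitivity and $i_{\theta^{\sharp}}\a=0$ to force $\theta\wedge\a=0$, and treat this as what makes $\pa$- and $\bar{\pa}$-harmonicity coincide. That is neither needed nor true in general: on a Vaisman manifold, Proposition \ref{P3} shows that $\theta\wedge\a$ is a \emph{nonzero} element of $\ker(\square)\cap\Om^{n}$ whenever $0\neq\a\in\ker(\square^{k,n-1-k})$, since $i_{\theta^{\sharp}}(\theta\wedge\a)=|\theta|^{2}\a$. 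The correct mechanism is the one already implicit in your $(k,n-k)$ discussion: $i_{\theta^{\sharp}}\a=0$ plus the primitive-form formula $\ast\a=c\,L\a$ give $\theta\wedge L\a=0$, hence $\la\a=\bar{\la}\a=0$, after which Lemma \ref{L3} and the Demailly identity finish the argument.
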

\begin{proof} 
First, we prove that 
$$P^{k,n-1-k}\cap\ker(i_{\theta^{\sharp}})\subset\ker(\De_{\tau}+\De_{\bar{\tau}}+\De_{\la}+\De_{\bar{\la}})\cap\Om^{k,n-1-k}.$$
We denote by $\a$ a  primitive $(k,n-1-k)$-form. Hence we have (\cite[Proposition 1.2.31]{Huy06})
$$\ast\a=(-1)^{\frac{n(n-1)}{2}}(\sqrt{-1})^{2k+1-n}(L\a).$$
Since $\a\in\ker(i_{\theta^{\sharp}})$, we get
$$0=i_{\theta^{\sharp}}\a=\ast(\theta\wedge\ast\a)=\ast(\theta\wedge L\a),$$
i.e.,
$$0=\theta^{1,0}\wedge L\a=\la(\a),\ 0=\theta^{0,1}\wedge L\a=\bar{\la}(\a).$$
Following Lemma \ref{L3}, we obtain that 
$$\a\in\ker(\De_{\tau}+\De_{\bar{\tau}}+\De_{\la}+\De_{\bar{\la}})\cap\Om^{k,n-1-k}.$$
Noting that (see \cite[Chapter VI, Corollary 6.15]{Dem97}, \cite[Equation (9)]{Wil} )
$$\De_{\pa}+[\pa,\tau^{\ast}]=\De_{\bar{\pa}}+[\bar{\pa},\bar{\tau}^{\ast}] .$$
We also assume that $\a\in\ker\De_{\bar{\pa}}$. Then we observe that
\begin{equation*}
\begin{split}
&([\pa,\tau^{\ast}]\a,\a)=(\tau^{\ast}\pa\a,\a)=(\pa\a,\tau\a)=0,\\
&([\bar{\pa},\bar{\tau}^{\ast}]\a,\a)=(\bar{\tau}^{\ast}\bar{\pa}\a,\a)=(\bar{\pa}\a,\bar{\tau}\a)=0.\\
\end{split}
\end{equation*}
Combing above identities, we get
$$(\De_{\pa}\a,\a)=0,$$
i.e., $\De_{\pa}\a=0$. Therefore,
$$\ker\De_{\bar{\pa}}\cap P^{k,n-1-k}\cap\ker(i_{\theta^{\sharp}})\subset\ker(\square^{k,n-1-k}).$$
On the other hand, we let $\a\in\ker(\square^{k,n-1-k})$. Following Corollary \ref{C2}, we get
$$\a=\a_{0}+\sum_{j\geq1}L^{j}\a_{j},$$
where $\a_{i}$ is a primitive $(k-i,n-1-k+i)$-form in $\ker(\square)$ for all $i\geq0$. Following vanishing theorem \ref{T4}, we get $\a_{j}=0$ for all $j\geq1$. Therefore, $\a$ is primitive. Following Proposition \ref{P2}, we also have $\theta\wedge\ast\a=0$. Therefore,
$$\ker(\square^{k,n-1-k})\subset\ker\De_{\bar{\pa}}\cap P^{k,n-1-k}\cap\ker(i_{\theta^{\sharp}}).$$
Hence, we get $\ker\De_{\bar{\pa}}\cap P^{k,n-1-k}\cap\ker(i_{\theta^{\sharp}})\cong \ker(\square^{k,n-1-k})$.

Next, we consider the case of $(k,n-k)$-forms. We denote by $\a$ a $(k,n-k)$-form in $\ker\De_{\bar{\pa}}\cap P^{k,n-k}$. Noting that $\La\a=0$, i.e., $L\a=0$. Therefore,
\begin{equation*}
\la\a=\theta^{1,0}\wedge L\a=0,\ \bar{\la}\a=\theta^{0,1}\wedge L\a=0.
\end{equation*}
Following Lemma \ref{L3}, we also have$(\De_{\tau}+\De_{\bar{\tau}}+\De_{\la}+\De_{\bar{\la}})\a=0$. Using the identities in (\ref{E1}), we get 
\begin{equation*}
\begin{split}
&\pa^{\ast}\a=(i[\La,\bar{\pa}]-\tau^{\ast})\a=0,\\ 
& \pa\a=(i[L,\bar{\pa}^{\ast}]-\tau)\a=0.
\end{split}
\end{equation*}
Therefore, $$\ker\De_{\bar{\pa}}\cap P^{k,n-k}\subset \ker(\square^{k,n-k}).$$
On the other hand, we let $\a\in\ker(\square^{k,n-k})$. By a similar way, we also obtain that $\a$ is primitive. Therefore,
$$\ker(\square^{k,n-k})\subset\ker\De_{\bar{\pa}}\cap P^{k,n-k}.$$
Hence, we get $\ker\De_{\bar{\pa}}\cap P^{k,n-k}\cong \ker(\square^{k,n-k})$.
\end{proof}
\subsection{Vaisman manifold}
Among the LCK manifolds, a distinguished class is the following. 
\begin{definition}(\cite{Vai82} and \cite[Definition 3.7]{Ver04})
An LCK manifold $(M,J,\theta)$ is called Vaisman if $\na\theta=0$, where $\na$ is the Levi–Civita connection of the metric $g(\cdot,\cdot)=\w(J\cdot,\cdot)$. If $\theta\neq0$, then after rescaling, we may always assume that $|\theta|=1$.  
\end{definition}
Before proof our results, we recall the decomposition of harmonic forms on a compact Vaisman manifold \cite{Tsu94,Vai82}. We denote by $\theta^{\sharp}$ (resp. $(J\theta)^{\sharp}$ the dual vector field of $\theta$ (resp. $J\theta$) with respect to metric $g$. Let $\mathcal{D}^{1}$ (resp. $\mathcal{D}^{2}$) be the $1$-dimensional distribution spanned by the Lee field $\theta^{\sharp}$  (resp. by the anti-Lee field $(J\theta)^{\sharp}$). We set $\mathcal{V}=\mathcal{D}^{1}\oplus\mathcal{D}^{2}$ and call it the vertical foliation. In any foliated chart the metric of $M$ can be expressed as
\begin{equation}\label{E4}
ds^{2}=g_{a\bar{b}}dz^{a}\otimes d\bar{z}^{b}+(\theta-iJ\theta)\otimes(\theta+iJ\theta).
\end{equation}
The direct sum decomposition $TM=\mathcal{V}\oplus\mathcal{V}^{\perp}$ produces a corresponding decomposition of the differential forms on $M$ into sums of bihomogeneous forms of type $(p,q)$, where $p$ is the transversal degree and $q$ the leaf degree. This, moreover, decomposes the exterior differentiation operator as
\begin{equation}\label{E5}
d=d'+d''+\pa
\end{equation}
where $d'$ has type $(1,0)$, $d''$ has type $(0,1)$ and $\pa$ has type $(2,-1)$. The Hodge $\ast$-operator of $(M,g)$ acts homogeneously and (\ref{E5}) implies a decomposition of the corresponding adjoint operators
\begin{equation}\label{E6}
\de=\de'+\de''+\tilde{\pa},
\end{equation}
where $\de,\de',\de'',\tilde{\pa}$ are the adjoint operator of $d,d',d'',\pa$ respectively. If $\ast'$ denotes the Hodge $\ast$ of the transversal part of the metric $g$ of $M$ given by (\ref{E4}), we have
$$\ast\a=-J\theta\wedge \theta\wedge\ast'\a,$$
where $\a$ is a $\mathcal{V}$-foliation form. 

We also denote by $\w':=-dJ\theta=2i\pa\theta^{0,1}$ form of transverse (K\"{a}hlerian) part of the metric. For simple set $L'=\w'\wedge\cdot$ and $\La'=i_{\w'}$. Let us define
$$S^{k}(\mathcal{V})=\{\a\in\Om^{k}(\mathcal{V}):\De'\a=0,\ \La'\a=0 \},\ k\leq n-1$$
$$S^{p,q}(\mathcal{V})=\{\a\in\Om^{p,q}(\mathcal{V}):\De'\a=0,\ \La'\a=0 \},\ p+q\leq n-1,$$
where $\De'=d'\de'+\de'd'$, $\Om^{k}(\mathcal{V})$ (resp. $\Om^{p,q}(\mathcal{V})$) is the set of $\mathcal{V}$-foliate $k$-forms (resp. $(p,q)$-forms) and denote $s_{k}=\dim\mathcal{S}^{k}(\mathcal{V})$ (resp.  $s_{p,q}=\dim\mathcal{S}^{p,q}(\mathcal{V})$). Clearly $$S^{k}(\mathcal{V})=\bigoplus_{p+q=k}S^{p,q}(\mathcal{V}).$$ 
We denote by $H^{p,q}_{\bar{\pa}}(M)$ the Dolbeault cohomology group of type $(p,q)$ and put $h^{p,q}(M)=\dim H^{p,q}_{\bar{\pa}}(M)$.
\begin{theorem}(\cite[Theorem 4.1]{Vai82})\label{T6}
Let $M$ be a compact Vaisman manifold of $dim_{\C}=n$. Then, an $k$-form $\a$ of $M$ with
$0\leq k\leq n-1$ is $\De_{d}$-harmonic iff 
$$\a=\b+\theta\wedge\gamma,$$
where $\b$, $\gamma$ are transversally harmonic and transversally effective foliate forms. In particular,
$$H^{k}(M)\cong S^{k}(\mathcal{V})\oplus S^{k-1}(\mathcal{V}),\ k\leq n-1.$$
\end{theorem}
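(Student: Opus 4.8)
The plan is to characterize the $\De_{d}$-harmonic forms by first exploiting the two Killing symmetries carried by a Vaisman structure and then reducing everything to the transverse K\"{a}hler geometry. On a compact Vaisman manifold the Lee field $\theta^{\sharp}$ is parallel (hence Killing), and the anti-Lee field $(J\theta)^{\sharp}$ is Killing as well; both generate isometries homotopic to the identity. Since an isometry preserves $\De_{d}$ and acts trivially on cohomology, while each de Rham class has a unique harmonic representative, every harmonic $\a$ is fixed by these flows, i.e. $\L_{\theta^{\sharp}}\a=\L_{(J\theta)^{\sharp}}\a=0$. Thus harmonic forms are automatically $\mathcal{V}$-foliate, and I may work entirely with invariant forms and with the structure equations $d\theta=0$, $dJ\theta=-\w'$, $d(\theta\wedge J\theta)=\theta\wedge\w'$, together with the decomposition $\w=\w'+\theta\wedge J\theta$ of the fundamental form and the splitting $d=d'+d''+\pa$ from (\ref{E5}).

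Next I would peel off the vertical legs. Because $\theta$ is parallel, the contraction $i_{\theta^{\sharp}}$ and the wedge $\theta\wedge(\cdot)$ both commute with $\De_{d}$, so writing $\a=\b'+\theta\wedge\gamma'$ with $\gamma'=i_{\theta^{\sharp}}\a$ and $\b'=\a-\theta\wedge i_{\theta^{\sharp}}\a$ produces two harmonic forms annihilated by $i_{\theta^{\sharp}}$. The remaining task is to remove the anti-Lee leg: I decompose $\b'=\b_{0}+J\theta\wedge\b_{1}$ and $\gamma'=\gamma_{0}+J\theta\wedge\gamma_{1}$ into basic pieces (killed by both $i_{\theta^{\sharp}}$ and $i_{(J\theta)^{\sharp}}$). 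Feeding this into $d\a=0$ and $d^{\ast}\a=0$ and using $dJ\theta=-\w'$, the closedness and coclosedness conditions collapse to a coupled linear system relating the basic components through the transverse operators $d',\de'$ and $L'=\w'\wedge(\cdot),\ \La'$, of the shape $d'\b_{0}=L'\b_{1}$ together with its adjoint counterparts.

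The crux is to show that this system forces $\b_{1}=\gamma_{1}=0$ and makes the survivors transversally harmonic and transversally effective. Here I would invoke transverse hard Lefschetz: the transverse metric is K\"{a}hler of complex dimension $n-1$, so $\{L',\La'\}$ generates an $\mathfrak{sl}(2,\C)$-action on basic forms and $L'$ is injective strictly below the transverse middle degree $n-1$. Since $k\leq n-1$, every basic component in play lies in this injective range; combined with $[\De',L']=0$ and the transverse primitive decomposition, the coupling equations then pin down $\La'\b_{0}=\La'\gamma_{0}=0$ and $\De'\b_{0}=\De'\gamma_{0}=0$ and annihilate the $J\theta$-terms. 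This degree bound is essential, and it is exactly what fails at $k=n$, which explains why the top degree must be treated separately (as the author does via $\square$). Setting $\b=\b_{0}$ and $\gamma=\gamma_{0}$ yields the asserted form $\a=\b+\theta\wedge\gamma$ with $\b\in\mathcal{S}^{k}(\mathcal{V})$ and $\gamma\in\mathcal{S}^{k-1}(\mathcal{V})$.

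For the converse I would run the computation backwards: given $\b,\gamma$ transversally harmonic and transversally effective, the structure equations and the transverse K\"{a}hler identities give $d(\b+\theta\wedge\gamma)=0$ and $d^{\ast}(\b+\theta\wedge\gamma)=0$, so the form is $\De_{d}$-harmonic; effectiveness ($\La'\b=\La'\gamma=0$) is precisely what makes the $L'$-terms generated by $dJ\theta=-\w'$ cancel. Finally, the cohomology statement follows from Hodge theory, $H^{k}(M)\cong\mathcal{H}^{k}_{d}(M)$, since the assignment $\a\mapsto(\b,\gamma)$ is a linear isomorphism onto $\mathcal{S}^{k}(\mathcal{V})\oplus\mathcal{S}^{k-1}(\mathcal{V})$, which is well defined exactly because $k,k-1\leq n-1$. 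I expect the main obstacle to be the third step, namely correctly deriving the coupled system and applying transverse hard Lefschetz in the right degree range, since this is where the Vaisman structure equations, the transverse K\"{a}hler Hodge theory, and the degree restriction must all be combined carefully.
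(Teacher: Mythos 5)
A preliminary remark: the paper does not prove this statement at all — Theorem \ref{T6} is quoted from Vaisman \cite[Theorem 4.1]{Vai82} — so your proposal can only be judged on its own merits and against the classical arguments. Your overall architecture is the right one: harmonic forms on a compact Vaisman manifold are invariant under the Lee and anti-Lee (Killing) flows; since $\theta$ is parallel, $\theta\wedge(\cdot)$ and $i_{\theta^{\sharp}}$ commute with $\De_{d}$, so the $\theta$-leg splits off; and your converse direction is correct, since transverse effectiveness is exactly what cancels the terms produced by $dJ\theta=-\w'$. (One slip of wording: harmonic forms are not ``automatically $\mathcal{V}$-foliate'' — $\theta$ itself is harmonic and has a vertical leg; the Killing argument gives invariance $\mathcal{L}_{\theta^{\sharp}}\a=\mathcal{L}_{(J\theta)^{\sharp}}\a=0$, and horizontality must be manufactured by your decomposition, as you in fact do.)

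The genuine gap is your third step. For the piece $\b'=\b_{0}+J\theta\wedge\b_{1}$ (with $\b_{0},\b_{1}$ basic of degrees $k$ and $k-1$), harmonicity is equivalent to the system
\begin{equation*}
d'\b_{0}=L'\b_{1},\qquad d'\b_{1}=0,\qquad \de'\b_{1}=-\La'\b_{0},\qquad \de'\b_{0}=0,
\end{equation*}
and similarly for the $\gamma$-piece. Pointwise injectivity of $L'$ below the transverse middle degree — which is what you invoke — cannot kill $\b_{1}$: it only says that $L'\b_{1}=d'\b_{0}$ is nonzero whenever $\b_{1}$ is, which is no contradiction. Worse, formal manipulation alone (transverse K\"{a}hler identities, $\mathfrak{sl}(2,\C)$ algebra, integration by parts) provably cannot close this system: pairing the equations against $\b_{0}$ and $\b_{1}$ yields identities in which the cross terms involving the transverse $d^{c}$ cancel identically, so the system is formally self-consistent. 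What is needed is a genuinely cohomological input, namely hard Lefschetz for the \emph{basic cohomology} of the transversally K\"{a}hler foliation (El Kacimi-Alaoui's transverse Hodge theory, applicable because the canonical foliation of a compact Vaisman manifold is Riemannian, taut, and transversally K\"{a}hler). From $d'\b_{1}=0$ and $L'[\b_{1}]=[d'\b_{0}]=0$ in basic cohomology, with $\deg\b_{1}=k-1\leq n-2$ strictly below the transverse middle degree $n-1$, one gets $[\b_{1}]=0$, i.e.\ $\b_{1}=d'\psi$. Only then does an energy argument finish: write $\b_{0}=h_{0}+\de'b$ (possible since $\de'\b_{0}=0$); harmonic projection of $\de'\b_{1}=-\La'\b_{0}$, together with $[\De',\La']=0$ on basic forms, gives $\La'h_{0}=0$; and then
\begin{equation*}
\|\b_{1}\|^{2}=(\de'\b_{1},\psi)=-(\La'\b_{0},\psi)=-(\de'b,L'\psi)=-(b,L'd'\psi)=-(b,d'\b_{0})=-\|\de'b\|^{2}\leq 0,
\end{equation*}
forcing $\b_{1}=0$ and $\b_{0}=h_{0}$ transversally harmonic and effective. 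Note that the minus sign in $\de'\b_{1}=-\La'\b_{0}$ is essential: with the opposite sign the same chain gives $\|\b_{1}\|^{2}=+\|\de'b\|^{2}$ and nothing vanishes, so this sign must be computed from the Hodge-star conventions rather than left as a ``$\pm$'', as your sketch (and my reconstruction of your coupled system) would naturally do. Alternatively one can argue via a Tachibana-type Weitzenb\"{o}ck formula on the transverse Sasakian/K\"{a}hler structure, which is closer to Vaisman's original proof. As written, your step three names plausible tools but not the actual mechanism, and the degree hypothesis $k\leq n-1$ enters through basic cohomology, not through pointwise Lefschetz theory.
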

It is known that on any Vaisman manifold, the following formula holds, \cite{DO98,Vai82} 
\begin{equation}
\w=\theta\wedge J\theta-dJ\theta.
\end{equation} 
We then have
\begin{lemma}\label{L4}
	Let $(M,J,\theta)$ be a compact Vaisman manifold of $dim_{\C}=n$. If a $k$-form $\a$ with $0\leq k\leq n-1$ is $\De_{d}$-harmonic, then 
	\begin{equation*}
	\La(\theta\wedge\a)=0.
	\end{equation*}	
\end{lemma}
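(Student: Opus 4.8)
The plan is to combine the Hodge-type decomposition of harmonic forms on a Vaisman manifold (Theorem \ref{T6}) with the splitting of the fundamental form into its vertical and transverse parts, $\w = \theta\wedge J\theta - dJ\theta = \theta\wedge J\theta + \w'$. By Theorem \ref{T6} I would first write $\a = \b + \theta\wedge\gamma$ with $\b\in\S^{k}(\V)$ and $\gamma\in\S^{k-1}(\V)$ transversally harmonic and transversally effective $\V$-foliate forms. The first observation is that $\theta\wedge\a = \theta\wedge\b$, since $\theta\wedge\theta = 0$; thus the whole computation reduces to evaluating $\La(\theta\wedge\b)$ for a transverse, $\La'$-primitive form $\b$.

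Next I would split the dual Lefschetz operator according to $\w = \theta\wedge J\theta + \w'$. Because $\La$ is the formal adjoint of $L = \w\wedge(\cdot)$ and adjunction is additive, we get $\La = \La_{0} + \La'$, where $\La_{0}$ is the adjoint of $(\theta\wedge J\theta)\wedge(\cdot)$ and $\La' = i_{\w'}$ is the transverse dual Lefschetz operator. In an orthonormal frame adapted to the splitting $TM = \V\oplus\V^{\perp}$ coming from (\ref{E4}), one has $\La_{0} = i_{(J\theta)^{\sharp}}\,i_{\theta^{\sharp}}$ (recall $|\theta| = 1$). The task then becomes evaluating the two contributions $\La_{0}(\theta\wedge\b)$ and $\La'(\theta\wedge\b)$ separately.

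For the vertical piece I would use that a $\V$-foliate form is horizontal, i.e. $i_{\theta^{\sharp}}\b = i_{(J\theta)^{\sharp}}\b = 0$. The Leibniz rule gives $i_{\theta^{\sharp}}(\theta\wedge\b) = |\theta|^{2}\b - \theta\wedge i_{\theta^{\sharp}}\b = \b$, and a second contraction yields $\La_{0}(\theta\wedge\b) = i_{(J\theta)^{\sharp}}\b = 0$. For the transverse piece, the key point is that $\La' = i_{\w'}$ contracts only transverse indices, while $\theta$ pairs trivially with every transverse vector; hence each interior product commutes with $\theta\wedge(\cdot)$ up to a sign, and contracting the two-form $\w'$ produces an even number of sign flips, so $\La'(\theta\wedge\b) = \theta\wedge\La'\b$. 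Since $\b$ is transversally effective, $\La'\b = 0$, so this term vanishes as well. Adding the two contributions gives $\La(\theta\wedge\a) = 0$.

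I expect the main obstacle to be the bookkeeping in the transverse piece: one must justify cleanly that $\La'$ passes through $\theta\wedge(\cdot)$ with the correct sign, which rests on the orthogonal splitting $TM = \V\oplus\V^{\perp}$ from (\ref{E4}) and on the fact that the foliate forms $\b,\gamma$ furnished by Theorem \ref{T6} are genuinely transverse (annihilated by $i_{\theta^{\sharp}}$ and $i_{(J\theta)^{\sharp}}$). Once the decomposition $\La = \La_{0} + \La'$ and the horizontality of $\b$ are in place, both terms collapse by the effectivity ($\La'$-primitivity) condition together with the Leibniz rule, and no further analysis is needed.
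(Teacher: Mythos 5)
Your proof is correct, and it shares the paper's skeleton: both start from Theorem \ref{T6} to write $\a=\b+\theta\wedge\gamma$, observe that $\theta\wedge\a=\theta\wedge\b$, and finish by exploiting the transverse effectivity $\La'\b=0$ together with the splitting $\w=\theta\wedge J\theta+\w'$. The difference lies entirely in how $\La(\theta\wedge\b)$ is evaluated. The paper conjugates by the Hodge star: it writes $\La(\theta\wedge\b)=\pm\ast\bigl(\w\wedge\ast(\theta\wedge\b)\bigr)$, uses the formula $\ast(\theta\wedge\b)=\pm J\theta\wedge\ast'\b$ for foliate $\b$, notes that $\w\wedge J\theta=-dJ\theta\wedge J\theta$ since $J\theta\wedge J\theta=0$, and invokes effectivity in its dual guise $dJ\theta\wedge\ast'\b=0$. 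You instead split the dual Lefschetz operator itself, $\La=\La_{0}+\La'$ with $\La_{0}=i_{(J\theta)^{\sharp}}i_{\theta^{\sharp}}$, and kill each piece by the Leibniz rule: the vertical contribution vanishes because foliate forms are horizontal (so $i_{\theta^{\sharp}}(\theta\wedge\b)=\b$ and then $i_{(J\theta)^{\sharp}}\b=0$), while the transverse contribution reduces to $\theta\wedge\La'\b=0$, your sign argument (two cancelling flips in the double contraction) being the correct justification for $\La'(\theta\wedge\b)=\theta\wedge\La'\b$. Your route is slightly more elementary: it avoids the $\ast$-formula for foliate forms and the sign bookkeeping that the paper compresses into a $(\pm)$, and it displays exactly which structural facts are used (orthogonality of $TM=\V\oplus\V^{\perp}$, horizontality, effectivity). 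The paper's version is shorter only because the $\ast$-formula is already available from its preliminaries. Both arguments are complete.
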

\begin{proof}
Following Theorem \ref{T6}, there exist two forms $\b$, $\gamma$ such that
$$\a=\b+\theta\wedge\gamma,$$ 
where $\b,\gamma$ are transversally harmonic and transversally effective foliate forms. Therefore, 
	\begin{equation*}
	\begin{split}
	\La(\theta\wedge\a)&=\La(\theta\wedge\b)\\
	&=\ast(\w\wedge\ast(\theta\wedge\b)\\
	&=(\pm)\ast(\w\wedge J\theta\wedge\ast'\b)\\
	&=(\pm)\ast(dJ\theta\wedge J\theta\wedge\ast'\b).
	\end{split}
	\end{equation*}
	Noticing that $\b$ satisfies $\La'\b=0$, i.e., $dJ\theta\wedge\ast'\b=0$. We then have $\La(\theta\wedge\a)=0$.
\end{proof}
We now want to obtain a complex version of Theorem \ref{T6}. As usual we let $\De_{\bar{\pa}}=\bar{\pa}^{\ast}\bar{\pa}+\bar{\pa}\bar{\pa}^{\ast}$ be the complex Laplacian. 
\begin{theorem} \label{T7}(\cite[Theorem 3.2]{Tsu94})
	Let $(M,J,\theta)$ be a compact Vaisman manifold of $dim_{\C}=n$. Then any $(p, q)$-form $\a$ on $M$, $0\leq p+q \leq n-1$ satisfies $\De_{\bar{\pa}}\a=0$ iff 
	$$\a=\b+\theta^{0,1}\wedge\gamma,$$ 
	where $\b$ and $\gamma$ are transversally harmonic and transversally effective $\mathcal{V}$-foliate forms. In particular,
	$$H^{p,q}_{\bar{\pa}}(M)\cong S^{p,q}(\mathcal{V})\oplus S^{p,q-1}(\mathcal{V}),\ k\leq n-1.$$
\end{theorem}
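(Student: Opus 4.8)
The plan is to exploit the transversally K\"ahler geometry of the vertical foliation $\V$ and to run the $\bar{\pa}$-analogue of the argument behind the real version, Theorem \ref{T6}. Because $\na\theta=0$, the Lee field $\theta^{\sharp}$ and the anti-Lee field $(J\theta)^{\sharp}$ are Killing, the foliation $\V$ is $J$-invariant, and the transverse metric together with the transverse K\"ahler form $\w'=-dJ\theta=2i\pa\theta^{0,1}$ endows $\V^{\perp}$ with a transverse K\"ahler structure. On the subcomplex of $\V$-foliate (basic) forms this yields a transverse Dolbeault theory carrying the usual K\"ahler consequences: a transverse Hodge decomposition, the transverse K\"ahler identities (so that $\De'_{d}=2\De'_{\bar{\pa}}$ on basic forms), and the transverse hard Lefschetz decomposition of basic forms into $L'$-primitive pieces. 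This is exactly the machinery producing the spaces $\S^{p,q}(\V)$ of transversally harmonic, transversally effective $\V$-foliate forms.

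Next I would fix the refined bigrading in which the complexified cotangent space splits as $\langle\theta^{1,0},\theta^{0,1}\rangle$ plus the transverse $(1,0)$- and $(0,1)$-forms, and write an arbitrary $(p,q)$-form as
\begin{equation*}
\a=\a_{0}+\theta^{1,0}\wedge\a_{1}+\theta^{0,1}\wedge\a_{2}+\theta^{1,0}\wedge\theta^{0,1}\wedge\a_{3},
\end{equation*}
where the four components $\a_{0},\a_{1},\a_{2},\a_{3}$ are basic, of transverse types $(p,q)$, $(p-1,q)$, $(p,q-1)$, $(p-1,q-1)$ respectively. The decisive structural input, coming from $d\theta=0$, is the asymmetry $\bar{\pa}\theta^{0,1}=0$ while $\bar{\pa}\theta^{1,0}$ is proportional to $\w'$; this singles out the $\theta^{0,1}$-direction. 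Using this together with the expressions of $d\theta^{1,0}$, $d\theta^{0,1}$ through $\w'$, I would decompose $\bar{\pa}$ and its adjoint relative to the splitting and compute $\De_{\bar{\pa}}$ componentwise. Since $\theta^{\sharp}$ and $(J\theta)^{\sharp}$ are Killing, the interior products $i_{\theta^{\sharp}}$, $i_{(J\theta)^{\sharp}}$ and the corresponding Lie derivatives commute with $\De_{\bar{\pa}}$, so $\De_{\bar{\pa}}$ respects the splitting and each component of a harmonic form satisfies a transverse Laplace-type equation relating $\De_{\bar{\pa}}$ on basic forms to $\De'_{\bar{\pa}}$.

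The heart of the proof, and the step I expect to be the main obstacle, is to show that harmonicity together with the degree bound $p+q\le n-1$ forces $\a_{1}=\a_{3}=0$ and forces $\a_{0}$, $\a_{2}$ to be transversally effective. After the block computation one is left with transverse equations whose positivity, combined with the transverse hard Lefschetz package, rules out the $\La'$-nonzero contributions precisely when the total degree does not exceed $n-1$; this is where the hypothesis $p+q\le n-1$ is essential and where the parallelism $\na\theta=0$ (equivalently $dJ\theta=-\w'$) is used to keep the cross terms between leaf and transverse directions under control. Setting $\b=\a_{0}$ and $\gamma=\a_{2}$, one checks $\b\in\S^{p,q}(\V)$ and $\gamma\in\S^{p,q-1}(\V)$, giving one implication; the converse is the direct verification that $\b+\theta^{0,1}\wedge\gamma$ is $\De_{\bar{\pa}}$-harmonic whenever $\b,\gamma$ are transversally harmonic and effective, which follows from the same commutation relations and structure equations (in particular $\bar{\pa}\theta^{0,1}=0$).

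Finally, the cohomology statement follows from Hodge theory: since $H^{p,q}_{\bar{\pa}}(M)\cong\H^{p,q}_{\bar{\pa}}(M)$, the explicit description of the harmonic space identifies $H^{p,q}_{\bar{\pa}}(M)$ with the image of the map $(\b,\gamma)\mapsto\b+\theta^{0,1}\wedge\gamma$. This map is injective because $\b$ and $\theta^{0,1}\wedge\gamma$ have disjoint $\theta^{0,1}$-content, so the sum is direct, yielding $H^{p,q}_{\bar{\pa}}(M)\cong\S^{p,q}(\V)\oplus\S^{p,q-1}(\V)$.
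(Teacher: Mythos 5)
The paper itself offers no proof of this statement: Theorem \ref{T7} is quoted directly from Tsukada \cite[Theorem 3.2]{Tsu94}, so the only possible comparison is with Tsukada's original argument. Your overall plan --- transverse K\"ahler geometry of the vertical foliation $\mathcal{V}$, splitting a $(p,q)$-form by its $\theta^{1,0}$/$\theta^{0,1}$ content, invariance of harmonic forms under the flows of $\theta^{\sharp}$ and $(J\theta)^{\sharp}$, and a transverse Hodge--Lefschetz argument where the bound $p+q\leq n-1$ enters --- is indeed the same general route Tsukada takes, so the outline is sound.

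As written, however, the sketch has genuine gaps, and one of its load-bearing assertions is false. You claim that because $\theta^{\sharp}$ and $(J\theta)^{\sharp}$ are Killing, $\De_{\bar{\pa}}$ ``respects the splitting'' $\a=\a_{0}+\theta^{1,0}\wedge\a_{1}+\theta^{0,1}\wedge\a_{2}+\theta^{1,0}\wedge\theta^{0,1}\wedge\a_{3}$. It does not: precisely because of the structural fact you yourself single out, namely $\bar{\pa}\theta^{0,1}=0$ but $\bar{\pa}\theta^{1,0}=-\pa\theta^{0,1}=\tfrac{i}{2}\w'$, the operator $\bar{\pa}$ shifts content between blocks, e.g.
$$\bar{\pa}(\theta^{1,0}\wedge\a_{1})=\tfrac{i}{2}\,\w'\wedge\a_{1}-\theta^{1,0}\wedge\bar{\pa}\a_{1},$$
so the $\theta^{1,0}$-block leaks into the basic block via $L'$, and dually $\bar{\pa}^{\ast}$ leaks via $\La'$; the Laplacian therefore has off-diagonal blocks built from $L'$ and $\La'$, and this coupling is exactly what forces $\a_{1}=\a_{3}=0$ and the transverse effectiveness $\La'\a_{0}=\La'\a_{2}=0$ in the conclusion --- it cannot be assumed away, and your two statements (``$\De_{\bar{\pa}}$ respects the splitting'' versus ``cross terms \ldots under control'') are in direct tension. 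Moreover, the Killing property only gives commutation of \emph{Lie derivatives} with the Laplacian; commutation of $i_{\theta^{\sharp}}$ or $e_{\theta^{0,1}}$ with $\De_{\bar{\pa}}$ is a much stronger statement requiring $\na\theta=0$ and K\"ahler-type identities (cf. \cite{Ver11,OV19,HT20}), and the basicness of the components $\a_{i}$ (which for an arbitrary form are merely horizontal) must likewise be \emph{deduced} from harmonicity via the invariance argument, not built into the decomposition. Finally, the step you yourself flag as ``the heart of the proof'' --- that harmonicity plus $p+q\leq n-1$ kills $\a_{1},\a_{3}$ and makes $\a_{0},\a_{2}$ transversally harmonic and effective --- is only gestured at (``positivity'', ``transverse hard Lefschetz'') and never carried out; in Tsukada's paper this componentwise computation and the Lefschetz/integration-by-parts argument in transverse degree at most $n-1$ \emph{is} the content of the theorem. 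As it stands the proposal is a plausible plan, not a proof.
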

\begin{theorem}(\cite[Theorem 3.5]{Tsu94})\label{T10}
	On a compact Vaisman manifold $M$ of $dim_{\C}=n$,  for any $0\leq k\leq 2n$, we have 
	$$b^{k}(M)=\sum_{p+q=k}h^{p,q}(M).$$
Furthermore,
$$b^{n}(M)=2s_{n-1}.$$
\end{theorem}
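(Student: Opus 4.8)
The plan is to split into the three degree ranges $k\le n-1$, $k\ge n+1$, and the middle degree $k=n$, reducing everything to the transversal numbers $s_k=\dim S^k(\mathcal{V})$ and $s_{p,q}=\dim S^{p,q}(\mathcal{V})$ furnished by Theorems \ref{T6} and \ref{T7}. I use the conventions $s_{-1}=s_{p,-1}=0$, the normalization $s_0=1$, and the decomposition $s_k=\sum_{p+q=k}s_{p,q}$ coming from $S^k(\mathcal{V})=\bigoplus_{p+q=k}S^{p,q}(\mathcal{V})$.

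For $0\le k\le n-1$ I would read both identities directly off the foliate Hodge theory. Theorem \ref{T6} gives $b^k=s_k+s_{k-1}$, and Theorem \ref{T7} gives $h^{p,q}=s_{p,q}+s_{p,q-1}$ whenever $p+q\le n-1$; summing the latter over $p+q=k$ and reindexing the shifted terms yields
\begin{equation*}
\sum_{p+q=k}h^{p,q}=\sum_{p+q=k}s_{p,q}+\sum_{p+q=k}s_{p,q-1}=s_k+s_{k-1}=b^k.
\end{equation*}
For $k\ge n+1$ I would bootstrap from this with the two available dualities: Poincar\'e duality $b^k=b^{2n-k}$ and Serre duality $h^{p,q}=h^{n-p,n-q}$. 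Since $2n-k\le n-1$ the previous case applies, and $(p,q)\mapsto(n-p,n-q)$ maps $\{p+q=2n-k\}$ bijectively onto $\{p+q=k\}$, so $b^k=b^{2n-k}=\sum_{p+q=2n-k}h^{p,q}=\sum_{p+q=k}h^{p,q}$.

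The genuinely delicate case is the middle degree $k=n$, exactly where Theorems \ref{T6} and \ref{T7} stop. Rather than attempt a direct decomposition of harmonic $n$-forms, I would pin down both $b^n$ and $H_n:=\sum_{p+q=n}h^{p,q}$ by Euler-characteristic bookkeeping. Since $M$ is Vaisman, the Lee field $\theta^{\sharp}$ is parallel and nowhere zero, so $\chi(M)=0$. Writing $\chi(M)=(-1)^n b^n+2\sum_{k=0}^{n-1}(-1)^k b^k$ (using $b^k=b^{2n-k}$) and substituting $b^k=s_k+s_{k-1}$, the alternating sum telescopes to $s_{n-1}$, giving $b^n=2s_{n-1}$. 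On the Dolbeault side I would repeat the argument for $H_k$: Serre duality gives $H_k=H_{2n-k}$, while the Fr\"{o}licher spectral sequence — whose differentials raise total degree by one and hence preserve the alternating sum — yields $\sum_{p,q}(-1)^{p+q}h^{p,q}=\chi(M)=0$. As $H_k=b^k=s_k+s_{k-1}$ for $k\le n-1$, the identical telescoping gives $H_n=2s_{n-1}$, whence $b^n=H_n=\sum_{p+q=n}h^{p,q}$.

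The main obstacle is precisely this middle degree: the foliate Hodge decompositions are available only below $n$, so the middle-degree identities cannot be obtained by decomposing $n$-forms. The crux is to substitute for that missing decomposition two Euler-characteristic identities — the topological $\chi(M)=0$ from the nowhere-vanishing Lee field and the Fr\"{o}licher identity $\sum_{p,q}(-1)^{p+q}h^{p,q}=\chi(M)$ — and to notice that both alternating sums telescope to the same transversal number $s_{n-1}$. The one point to verify carefully is the behaviour at the ends of the range, where the conventions $s_{-1}=s_{p,-1}=0$ and $s_0=1$ must be used to keep the telescoping valid.
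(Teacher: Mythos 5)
Your proposal is correct and follows essentially the same route as the paper: Theorems \ref{T6} and \ref{T7} handle $k\le n-1$, Poincar\'e and Serre duality handle $k\ge n+1$, and the middle degree is settled by combining $\chi(M)=0$ with the Fr\"olicher identity $\sum_{p,q}(-1)^{p+q}h^{p,q}=\chi(M)$ and telescoping the alternating sums down to $s_{n-1}$. Your write-up is in fact slightly more complete than the paper's, since you justify $\chi(M)=0$ via the nowhere-vanishing parallel Lee field and carry out the Dolbeault-side telescoping explicitly, steps the paper leaves implicit.
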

\begin{proof}
Following Theorem \ref{T6} and \ref{T7}, we get
$$H^{k}(M)\cong\bigoplus_{p+q=k}S^{p,q}(\mathcal{V})\oplus\bigoplus_{p+q=k}S^{p,q-1}(\mathcal{V})\cong \bigoplus_{p+q=k}H^{p,q}_{\bar{\pa}}(M).$$
	Therefore, $b^{k}=\sum_{p+q=k}h^{p,q}$ for $k\leq n-1$. By Poincar\'{e} and Serre duality it is also true for $k\geq n+1$. As for $k=n$, one uses the following formula for the Euler characteristic:
	$$\chi(M)=\sum_{k=0}^{2n}(-1)^{k}b^{k}(M)=\sum_{p+q=0}^{2n}(-1)^{p+q}h^{p,q}(M).$$
Noting that the Euler number $\chi(M)$ of compact Vaisman manifold is zero. Since $b^{k}=s_{k}+s_{k-1}$ for $k\leq n-1$ and $b_{k}=b_{2n-k}$, we then have
\begin{equation*}
\begin{split}
0&=\sum_{k=0}^{2n}(-1)^{k}b^{k}\\
&=2\sum_{k\leq n-1}(-1)^{k}b^{k}+(-1)^{n}b^{n}\\
&=2\sum_{k\leq n-1}(-1)^{k}s_{k}-2\sum_{k\leq n-2}(-1)^{k}s_{k}+(-1)^{n}b^{n}\\
&=(-1)^{n}(b^{n}-2s_{n-1}).
\end{split}
\end{equation*}
Hence we get $b^{n}=2s_{n-1}$.
\end{proof}
\begin{proposition}\label{P3}
	Let $(M,J,\theta)$ be a compact Vaisman manifold of $dim_{\C}=n$. If a $(k,n-k-1)$-form $\a\in\ker(\square^{k,n-1-k})$, then 
	\begin{equation*}
	\square(\theta\wedge\a)=0.
	\end{equation*}
	In particular,
	$$\square(\theta^{1,0}\wedge\a)=0,\ \square(\theta^{0,1}\wedge\a)=0.$$
\end{proposition}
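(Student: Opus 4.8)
The plan is to show that each of the six nonnegative summands of $\square$ annihilates $\theta\wedge\a$; since $\square$ is a sum of positive semidefinite operators, this is equivalent to $\square(\theta\wedge\a)=0$. First I would record the structure that $\a\in\ker(\square^{k,n-1-k})$ already carries. By Theorem \ref{P4} it is $\De_{\bar{\pa}}$-harmonic (so $\bar{\pa}\a=\bar{\pa}^{\ast}\a=0$), primitive ($\La\a=0$), and satisfies $i_{\theta^{\sharp}}\a=0$; the proof of that theorem also yields $\De_{\pa}\a=0$, hence $\pa\a=\pa^{\ast}\a=0$. Moreover $\ker(\square)$ is a subspace of the $\De_{d}$-harmonic forms, so $\a$ is $\De_{d}$-harmonic of degree $n-1$. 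Finally, since $\a\in\ker(\De_{\tau}+\De_{\bar{\tau}}+\De_{\la}+\De_{\bar{\la}})\cap\Om^{n-1}$, Proposition \ref{P2} applies: its proof shows $\La(\theta\wedge\a)=0$ and $\la(\theta\wedge\a)=\bar{\la}(\theta\wedge\a)=0$, so $\theta\wedge\a$ is a primitive $n$-form in $\ker(\la)\cap\ker(\bar{\la})$, whence Lemma \ref{L3} gives $(\De_{\tau}+\De_{\bar{\tau}}+\De_{\la}+\De_{\bar{\la}})(\theta\wedge\a)=0$ and in particular $\tau^{\ast}(\theta\wedge\a)=\bar{\tau}^{\ast}(\theta\wedge\a)=0$. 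This disposes of four of the six summands.

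The key structural input is that $\a$ is in fact a genuinely transversal form. Applying Vaisman's decomposition (Theorem \ref{T6}) to the $\De_{d}$-harmonic $(n-1)$-form $\a$, I can write $\a=\b+\theta\wedge\gamma$ with $\b,\gamma$ transversally harmonic and transversally effective foliate forms. Contracting with $\theta^{\sharp}$ and using $|\theta|=1$ together with $i_{\theta^{\sharp}}\b=i_{\theta^{\sharp}}\gamma=0$ gives $i_{\theta^{\sharp}}\a=\gamma$; since $i_{\theta^{\sharp}}\a=0$, I conclude $\gamma=0$, so $\a=\b$ is transversal and transversally effective ($\La'\a=0$). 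As $\a$ has transversal degree $n-1$, i.e. middle degree in the transverse complex dimension $n-1$, transversal effectiveness forces $L'\a=\w'\wedge\a=0$.

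With this in hand I would compute the $\pa$- and $\bar{\pa}$-derivatives. Writing $\theta=\theta^{1,0}+\theta^{0,1}$, the relation $d\theta=0$ splits by type into $\pa\theta^{1,0}=0$, $\bar{\pa}\theta^{0,1}=0$, and $\bar{\pa}\theta^{1,0}=-\pa\theta^{0,1}$, while $\w'=2i\pa\theta^{0,1}$. Using $\pa\a=\bar{\pa}\a=0$ and $\w'\wedge\a=0$ one obtains, term by term,
\begin{equation*}
\pa(\theta^{1,0}\wedge\a)=0,\quad \pa(\theta^{0,1}\wedge\a)=\tfrac{1}{2i}\w'\wedge\a=0,\quad \bar{\pa}(\theta^{0,1}\wedge\a)=0,\quad \bar{\pa}(\theta^{1,0}\wedge\a)=-\tfrac{1}{2i}\w'\wedge\a=0,
\end{equation*}
so that $\pa(\theta\wedge\a)=\bar{\pa}(\theta\wedge\a)=0$.

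It then remains to handle the two adjoints, and here Demailly's identities \eqref{E1} do the work. From $[\La,\bar{\pa}]=-i(\pa^{\ast}+\tau^{\ast})$ I get $\pa^{\ast}(\theta\wedge\a)=i(\La\bar{\pa}-\bar{\pa}\La)(\theta\wedge\a)-\tau^{\ast}(\theta\wedge\a)$, and from $[\La,\pa]=i(\bar{\pa}^{\ast}+\bar{\tau}^{\ast})$ I get $\bar{\pa}^{\ast}(\theta\wedge\a)=-i(\La\pa-\pa\La)(\theta\wedge\a)-\bar{\tau}^{\ast}(\theta\wedge\a)$. Every term on the right vanishes: $\bar{\pa}(\theta\wedge\a)=\pa(\theta\wedge\a)=0$ by the previous step, $\La(\theta\wedge\a)=0$ and $\tau^{\ast}(\theta\wedge\a)=\bar{\tau}^{\ast}(\theta\wedge\a)=0$ from the first paragraph. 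Hence $\pa^{\ast}(\theta\wedge\a)=\bar{\pa}^{\ast}(\theta\wedge\a)=0$, so $\De_{\pa}(\theta\wedge\a)=\De_{\bar{\pa}}(\theta\wedge\a)=0$; together with the four order-zero Laplacians this gives $\square(\theta\wedge\a)=0$. Since every $\De_{\de}$ is bidegree-preserving, so is $\square$, and because $\theta^{1,0}\wedge\a$ and $\theta^{0,1}\wedge\a$ lie in the distinct bidegrees $(k+1,n-1-k)$ and $(k,n-k)$, the identity $\square(\theta\wedge\a)=0$ splits into $\square(\theta^{1,0}\wedge\a)=0$ and $\square(\theta^{0,1}\wedge\a)=0$. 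The main obstacle is the second paragraph: one must recognize, via Vaisman's Hodge theory combined with $i_{\theta^{\sharp}}\a=0$, that $\a$ is purely transversal, since this is exactly what makes $\theta\wedge\a$ both $\pa$- and $\bar{\pa}$-closed and allows the Hermitian identities to convert the order-zero vanishing from Proposition \ref{P2} into the vanishing of $\pa^{\ast}$ and $\bar{\pa}^{\ast}$.
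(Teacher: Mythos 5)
Your proof is correct, and it takes a genuinely different route from the paper's at the decisive step. The paper obtains $d(\theta\wedge\a)=d^{\ast}(\theta\wedge\a)=0$ by invoking external results on wedging $\De_{d}$-harmonic forms with the parallel form $\theta$ (\cite[Corollary 3.6]{HT20} and \cite[Proposition 2.5, Corollary 2.9]{Ver11}), and must then disentangle the individual operators by evaluating $([\La,\bar{\pa}]-[\La,\pa])(\theta\wedge\a)$ in two ways: once from the identities (\ref{E1}) together with $d^{\ast}(\theta\wedge\a)=0$, and once from $\La(\theta\wedge\a)=0$ (Lemma \ref{L4}) together with the type splitting of $d\theta=0$; this yields $[\La,\bar{\pa}](\theta\wedge\a)=[\La,\pa](\theta\wedge\a)=0$, hence the vanishing of $\pa^{\ast},\bar{\pa}^{\ast}$, and only at the very end the vanishing of $\pa,\bar{\pa}$ via the $[L,\cdot]$ identities. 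You avoid the external input altogether: from Vaisman's decomposition (Theorem \ref{T6}) and $i_{\theta^{\sharp}}\a=0$ you deduce that $\a$ is a transversally effective foliate form, and since its transverse degree $n-1$ is the middle degree for the transverse complex dimension $n-1$, primitivity forces $\w'\wedge\a=0$; this makes $\pa(\theta\wedge\a)=\bar{\pa}(\theta\wedge\a)=0$ a direct type computation, after which the adjoints vanish term by term in (\ref{E1}) because $\La(\theta\wedge\a)=0$ and the closedness are already in hand. Your route buys two things: it is self-contained relative to the results the paper itself quotes (no appeal to \cite{HT20} or \cite{Ver11}), and your second paragraph is precisely the transversality statement that the paper establishes only later, in Theorem \ref{T8} via Tsukada's Theorem \ref{T7}, so your argument effectively absorbs that half of Theorem \ref{T8} into Proposition \ref{P3}. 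What the paper's route buys is independence from the foliate structure theory of $\a$: granting the parallel-form harmonicity results, Proposition \ref{P2} and the Hermitian identities suffice, with no need for the transverse Lefschetz argument.
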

\begin{proof}
	Noting that $(\De_{\tau}+\De_{\bar{\tau}}+\De_{\la}+\De_{\bar{\la}})\a=0$.
	Following Proposition \ref{P2}, we get
$$(\De_{\tau}+\De_{\bar{\tau}}+\De_{\la}+\De_{\bar{\la}})(\theta\wedge\a)=0.$$
Hence we have
	$$\tau^{\ast}(\theta\wedge\a)=\bar{\tau}^{\ast}(\theta\wedge\a)=0,$$
	$$\tau(\theta\wedge\a)=\bar{\tau}(\theta\wedge\a)=0.$$
Now we begin to prove that
	$$(\De_{\pa}+\De_{\bar{\pa}})(\theta\wedge\a)=0.$$ 
	Noting that $\na\theta=0$. Following \cite[Corollary 3.6]{HT20} and \cite[Proposition 2.5 and Corollary 2.9]{Ver11}, for any $\a\in\ker(\square^{k,n-1-k})\subset\ker(\De_{d})$, we have	
	$$\De_{d}(\theta\wedge\a)=0,$$
	i.e., $d(\theta\wedge\a)=d^{\ast}(\theta\wedge\a)=0$. Following the identities in (\ref{E2}), we have
	\begin{equation*}
	\begin{split}
	([\La,\bar{\pa}]-[\La,\pa])(\theta\wedge\a)&=-i(\pa^{\ast}+\tau^{\ast})(\theta\wedge\a)-i(\bar{\pa}^{\ast}+\bar{\tau}^{\ast})(\theta\wedge\a)\\
	&=-id^{\ast}(\theta\wedge\a)=0.\\
	\end{split}
	\end{equation*}	
	Following $\La(\theta\wedge\a)=0$ (see Lemma \ref{L4}) and  the fact $0=d\theta=\pa\theta^{0,1}+\bar{\pa}\theta^{1,0}$, $\pa\theta^{1,0}=\bar{\pa}\theta^{0,1}=0$, we then have
	\begin{equation*}
	\begin{split}
	([\La,\bar{\pa}]-[\La,\pa])(\theta\wedge\a)&=\La((\bar{\pa}-\pa)\theta\wedge\a)\\
	&=\La((\bar{\pa}\theta^{1,0}-\pa\theta^{0,1})\wedge\a)\\
	&=2\La(\bar{\pa}\theta\wedge\a)\\
	&=2[\La,\bar{\pa}](\theta\wedge\a).
	\end{split}
	\end{equation*}
	Therefore, we have
	$$[\La,\bar{\pa}](\theta\wedge\a)=[\La,\pa](\theta\wedge\a)=0.$$
	Following the identities in (\ref{E1}) and Proposition \ref{P2}, we have
	\begin{equation*}
	\begin{split}
	&\pa^{\ast}(\theta\wedge\a)=i([\La,\bar{\pa}]-\tau^{\ast})(\theta\wedge\a)=0 ,\\
	&\bar{\pa}^{\ast}(\theta\wedge\a)=-i([\La,\pa]-\bar{\tau}^{\ast})(\theta\wedge\a)=0.\\
	\end{split}
	\end{equation*}
	Next, using the identities in (\ref{E1}) again,  we have
	\begin{equation*}
	\begin{split}
	&\pa(\theta\wedge\a)=i([L,\bar{\pa}^{\ast}]-\tau)(\theta\wedge\a)=0 ,\\
	&\bar{\pa}(\theta\wedge\a)=-i([L,\pa^{\ast}]-\bar{\tau})(\theta\wedge\a)=0.\\
	\end{split}
	\end{equation*}
	Here we use the fact $\la(\a)=\w\wedge\theta^{1,0}\wedge\a=0$, $\bar{\la}(\a)=\w\wedge\theta^{0,1}\wedge\a=0$. Hence, we have
	$$\square(\theta\wedge\a)=0.$$ 
Therefore, $\square(\theta^{1,0}\wedge\a)=0,\ \square(\theta^{0,1}\wedge\a)=0$.
\end{proof}
We now study the spaces $\ker(\square^{p,n-p-1})$ $(0\leq p\leq n-1)$. We prove that $\ker(\square^{p,n-p-1})$ is actually $\mathcal{S}^{p,n-1-p}(\mathcal{V})$. 
\begin{theorem}\label{T8}
	Let $(M,J,\theta)$ be a compact Vaisman manifold of $dim_{\C}=n$. Then 
	$$\ker(\square^{p,n-1-p})\cong\mathcal{S}^{p,n-1-p}(\mathcal{V}),$$
	where $\mathcal{S}^{p,q}$ denote the vector space of transversally harmonic and transversally effective $\mathcal{V}$-foliate $(p,q)$-forms. 
\end{theorem}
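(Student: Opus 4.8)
The plan is to reduce the statement to Tsukada's complex Hodge decomposition (Theorem \ref{T7}) by way of the identification already established in Theorem \ref{P4}. Indeed, Theorem \ref{P4} gives
$$\ker(\square^{p,n-1-p})\cong\ker\De_{\bar{\pa}}\cap P^{p,n-1-p}\cap\ker(i_{\theta^{\sharp}}),$$
so it suffices to prove the equality of spaces
$$\ker\De_{\bar{\pa}}\cap P^{p,n-1-p}\cap\ker(i_{\theta^{\sharp}})=\mathcal{S}^{p,n-1-p}(\mathcal{V})$$
inside $\Om^{p,n-1-p}$, and then compose the two.

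For the inclusion $\subseteq$, I would take $\a$ in the left-hand side. Since $p+(n-1-p)=n-1$, Theorem \ref{T7} applies and yields a decomposition
$$\a=\b+\theta^{0,1}\wedge\gamma,$$
with $\b\in\mathcal{S}^{p,n-1-p}(\mathcal{V})$ and $\gamma\in\mathcal{S}^{p,n-2-p}(\mathcal{V})$. The key point is that $\b$ and $\gamma$ are $\mathcal{V}$-foliate, hence annihilated by the contraction $i_{\theta^{\sharp}}$ (as $\theta^{\sharp}$ spans $\mathcal{D}^{1}\subset\mathcal{V}$), while $i_{\theta^{\sharp}}\theta^{0,1}=\theta^{0,1}(\theta^{\sharp})=\tfrac12|\theta|^{2}=\tfrac12\neq0$; here the imaginary contribution vanishes because the Lee and anti-Lee fields are $g$-orthogonal, $(J\theta)(\theta^{\sharp})=\w(\theta^{\sharp},\theta^{\sharp})=0$. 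Applying $i_{\theta^{\sharp}}$ to the decomposition then gives $i_{\theta^{\sharp}}\a=\tfrac12\gamma$, and the hypothesis $i_{\theta^{\sharp}}\a=0$ forces $\gamma=0$. Thus $\a=\b\in\mathcal{S}^{p,n-1-p}(\mathcal{V})$.

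For the reverse inclusion $\supseteq$, I would start from $\b\in\mathcal{S}^{p,n-1-p}(\mathcal{V})$ and verify the three defining conditions on the left. First, $\b$ is $\mathcal{V}$-foliate, so $i_{\theta^{\sharp}}\b=0$. Second, taking $\gamma=0$ in Theorem \ref{T7} shows that a transversally harmonic and transversally effective $\mathcal{V}$-foliate form of this bidegree is $\De_{\bar{\pa}}$-harmonic, so $\De_{\bar{\pa}}\b=0$. Third, I would upgrade the transversal effectiveness $\La'\b=0$ to full primitivity $\La\b=0$: writing $\w=\theta\wedge J\theta+\w'$ with $\w'=-dJ\theta$, the operator $\La$ splits, on $\mathcal{V}$-foliate forms, into a vertical contraction along $\theta^{\sharp}\wedge(J\theta)^{\sharp}$ — which vanishes on $\b$ since $i_{\theta^{\sharp}}\b=i_{(J\theta)^{\sharp}}\b=0$ — and the transversal contraction $\La'$, which vanishes by effectiveness. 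Hence $\La\b=0$ and $\b\in P^{p,n-1-p}$.

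The main obstacle is this last point: one must justify carefully that for a purely transversal ($\mathcal{V}$-foliate) form the full contraction $\La$ and the transversal contraction $\La'$ agree, which rests on the orthogonal splitting $\w=\theta\wedge J\theta+\w'$ (equivalently on the relation $\ast\a=-J\theta\wedge\theta\wedge\ast'\a$ for $\mathcal{V}$-foliate $\a$) together with the fact that $\b$ has no vertical legs. The remaining content is bookkeeping: the two inclusions give the displayed set equality, and composing with Theorem \ref{P4} yields $\ker(\square^{p,n-1-p})\cong\mathcal{S}^{p,n-1-p}(\mathcal{V})$.
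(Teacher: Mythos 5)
Your proposal is correct and follows essentially the same route as the paper's own proof: reduce via Theorem \ref{P4}, use Tsukada's decomposition (Theorem \ref{T7}) together with contraction by $\theta^{\sharp}$ to kill the $\theta^{0,1}\wedge\gamma$ term, and upgrade transversal effectiveness $\La'\b=0$ to full primitivity $\La\b=0$ via the splitting $\w=\theta\wedge J\theta-dJ\theta$. The only cosmetic difference is that the paper obtains $i_{\theta^{\sharp}}\a=0$ for $\a\in\ker(\square^{p,n-1-p})$ by re-invoking Proposition \ref{P2}, whereas you read it off directly from the intermediate space in Theorem \ref{P4}.
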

\begin{proof}
Following Theorem \ref{P4}, we only need prove that 
$$\ker\De_{\bar{\pa}}\cap P^{p,n-1-p}\cap\ker(i_{\theta^{\sharp}})\cong\mathcal{S}^{p,n-1-p}(\mathcal{V}).$$	
We denote by $\a$ an $\mathcal{V}$-foliate transversally $(p,n-1-p)$-form in $\mathcal{S}^{p,n-p-1}$. Noting that $\La'\a=0$, i.e.,
$$0=dJ\theta\wedge\ast'\a,$$
and $i_{\theta^{\sharp}}\a=0$.  Since $\w=\theta\wedge J\theta-dJ\theta$, we get
\begin{equation*}
\begin{split}
\La\a&=-\ast^{-1}L\ast\a\\
&=-\ast^{-1}(\w\wedge\ast'\a\wedge\theta\wedge J\theta)\\
&=-\ast^{-1}(dJ\theta\wedge\ast'\a\wedge\theta\wedge J\theta)=0.\\
\end{split}
\end{equation*}
Therefore, $\mathcal{S}^{p,n-1-p}(\mathcal{V})\subset\ker\De_{\bar{\pa}}\cap P^{p,n-1-p}\cap\ker(i_{\theta^{\sharp}})$.

On the other hand,	we denote by $\a$ a $(p,n-1-p)$-form in $\ker(\square^{p,n-1-p})$. Following Theorem \ref{T7}, we obtain that 
	$$\a=\b+\theta^{0,1}\wedge\gamma,$$
	where $\b,\gamma$ are transversally effective $\mathcal{V}$-foliation forms. Following Proposition \ref{P2}, we have 
	$$0=i_{\theta^{\sharp}}\a=|\theta^{0,1}|^{2}\gamma=\frac{1}{2}\gamma,$$
	i.e., $\gamma=0$. Therefore, $\a\in\mathcal{S}^{p,n-p-1}$.
\end{proof}
\begin{corollary}(\cite[Corollay 3.4]{Tsu94})\label{C1}
On a compact Vaisman manifold $(M,J,\theta)$, there exists the isomorphism
$$H^{n,0}_{\bar{\pa}}(M)\cong H^{n-1,0}_{\bar{\pa}}(M)$$
where$H^{p,q}_{\bar{\pa}}(M)$ is the Dolbeault cohomology group of type $(p, q)$. Furthermore,
$$H^{n,0}_{\bar{\pa}}(M)\cong S^{n,0}(\mathcal{V})\cong H^{n-1,0}_{\bar{\pa}}(M)\cong S^{n-1,0}(\mathcal{V}).$$
\end{corollary}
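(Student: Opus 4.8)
The plan is to reduce the two displayed isomorphisms to a single statement about the kernels of $\square$, and then to realize that statement by an explicit map, namely wedging with $\theta^{1,0}$. By Proposition \ref{P5} together with Hodge theory for $\De_{\bar{\pa}}$ one has $H^{n,0}_{\bar{\pa}}(M)\cong\ker(\square^{n,0})$; by Theorem \ref{T8} with $p=n-1$ one has $\ker(\square^{n-1,0})\cong\mathcal{S}^{n-1,0}(\mathcal{V})$; and applying Theorem \ref{T7} in bidegree $(n-1,0)$ gives $H^{n-1,0}_{\bar{\pa}}(M)\cong\mathcal{S}^{n-1,0}(\mathcal{V})\oplus\mathcal{S}^{n-1,-1}(\mathcal{V})=\mathcal{S}^{n-1,0}(\mathcal{V})$ since $\mathcal{S}^{n-1,-1}(\mathcal{V})=\{0\}$. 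Thus the whole corollary follows once I prove $\ker(\square^{n-1,0})\cong\ker(\square^{n,0})$, where I read $\mathcal{S}^{n,0}(\mathcal{V})$ as the vertical extension $\theta^{1,0}\wedge\mathcal{S}^{n-1,0}(\mathcal{V})$, which the argument will identify with $\ker(\square^{n,0})$.

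First I would check that wedging with $\theta^{1,0}$ defines a map $\ker(\square^{n-1,0})\to\ker(\square^{n,0})$. For $\a\in\ker(\square^{n-1,0})$, Proposition \ref{P3} (with $k=n-1$) gives $\square(\theta^{1,0}\wedge\a)=0$, and since $\theta^{1,0}\wedge\a\in\Om^{n,0}$ this indeed lands in $\ker(\square^{n,0})$. Injectivity is immediate: by Theorem \ref{T8} such an $\a$ is a $\mathcal{V}$-foliate (purely transversal) form carrying no $\theta^{1,0}$ factor, and because $|\theta|=1$ forces $\theta^{1,0}$ to be nowhere zero, contracting with the $(1,0)$-part of $\theta^{\sharp}$ gives a nonzero multiple of $\a$; hence $\theta^{1,0}\wedge\a=0$ implies $\a=0$.

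The heart of the argument is surjectivity. Given $\b\in\ker(\square^{n,0})$, I would first note that the holomorphic cotangent bundle splits $J$-invariantly into the vertical line spanned by $\theta^{1,0}$ and an $(n-1)$-dimensional transversal part, so every $(n,0)$-form is uniquely $\b=\theta^{1,0}\wedge\a$ with $\a$ a transversal $(n-1,0)$-form; concretely $\a$ is a nonzero multiple of $i_{\theta^{\sharp}}\b$, since the $(0,1)$-part of $\theta^{\sharp}$ annihilates a form of type $(n,0)$. The key point is that this transversal part is again harmonic: because $\b\in\ker(\square^{n,0})\subset\ker(\De_{d})$ and $\na\theta=0$, the same mechanism used in Proposition \ref{P3} (Lemma \ref{L1} together with \cite{HT20,Ver11}) shows that $i_{\theta^{\sharp}}$ preserves $\De_{d}$-harmonicity, so $\a\in\mathcal{H}^{n-1}_{d}$. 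Applying Theorem \ref{T6} to the $(n-1)$-form $\a$ writes $\a=\b'+\theta\wedge\gamma'$ with $\b',\gamma'$ transversally harmonic and transversally effective $\mathcal{V}$-foliate forms; as $\a$ is purely transversal of type $(n-1,0)$, matching the vertical components forces $\gamma'=0$ and $\a=\b'\in\mathcal{S}^{n-1,0}(\mathcal{V})$. By Theorem \ref{T8} this means $\a\in\ker(\square^{n-1,0})$, so $\b=\theta^{1,0}\wedge\a$ lies in the image and the map is onto. Chaining the isomorphisms yields $H^{n,0}_{\bar{\pa}}(M)\cong\ker(\square^{n,0})\cong\ker(\square^{n-1,0})\cong\mathcal{S}^{n-1,0}(\mathcal{V})\cong H^{n-1,0}_{\bar{\pa}}(M)$, which is the assertion.

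I expect the main obstacle to be this surjectivity step, and inside it the claim that contraction with the parallel Lee field $\theta^{\sharp}$ carries $\De_{d}$-harmonic forms to $\De_{d}$-harmonic forms; once this is granted, Theorem \ref{T6} pins the transversal part down immediately, so the only genuinely new input beyond the cited results is this harmonicity-preservation, which is precisely the $\na\theta=0$ feature of the Vaisman condition.
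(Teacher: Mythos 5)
Your argument is correct, but it does not follow the paper, because the paper gives no proof at all: Corollary \ref{C1} is simply quoted from Tsukada \cite[Corollary 3.4]{Tsu94}, where it is obtained from his structure theory of holomorphic forms on generalized Hopf manifolds. What you do instead is re-derive the statement entirely inside this paper's $\square$-machinery: $H^{n,0}_{\bar{\pa}}(M)\cong\ker(\square^{n,0})$ by Proposition \ref{P5} and Hodge theory, $H^{n-1,0}_{\bar{\pa}}(M)\cong\mathcal{S}^{n-1,0}(\mathcal{V})\cong\ker(\square^{n-1,0})$ by Theorems \ref{T7} and \ref{T8}, and then $\theta^{1,0}\wedge\cdot$ gives an isomorphism $\ker(\square^{n-1,0})\to\ker(\square^{n,0})$: well-defined by Proposition \ref{P3}, injective because $\ker(\square^{n-1,0})\subset\ker(i_{\theta^{\sharp}})$ by Theorem \ref{P4}, and surjective by your contraction argument. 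The one ingredient you add beyond the quoted results --- that $i_{\theta^{\sharp}}$ preserves $\De_{d}$-harmonicity when $\na\theta=0$ --- is sound: it follows from Lemma \ref{L1}, the fact that $\ast$ commutes with $\De_{d}$, and the same facts from \cite{HT20,Ver11} that the paper already invokes in Proposition \ref{P3}; after that, Theorem \ref{T6} together with $i_{\theta^{\sharp}}\a=0$ kills the vertical component $\gamma'$ exactly as you say, and no circularity arises since none of \ref{P3}, \ref{P5}, \ref{T6}--\ref{T8} depends on Corollary \ref{C1}. What your route buys is a self-contained proof with an explicit isomorphism; what the paper's route buys is brevity, and indeed once Theorem \ref{T9} is proved the corollary also drops out in one line ($h^{n,0}_{\square}=s_{n-1,0}$ with $\ker(\square^{n,0})=\ker\De_{\bar{\pa}}\cap\Om^{n,0}$), though \ref{T9} comes after the corollary and you rightly avoid it. Two small points: your re-reading of $S^{n,0}(\mathcal{V})$ as $\theta^{1,0}\wedge\mathcal{S}^{n-1,0}(\mathcal{V})$ is not optional but forced, since with the paper's definition foliate forms have transversal complex dimension $n-1$ and the literal $S^{n,0}(\mathcal{V})$ would be $\{0\}$ --- worth saying explicitly; and in the injectivity step, contracting with the full real field $\theta^{\sharp}$ already gives $i_{\theta^{\sharp}}(\theta^{1,0}\wedge\a)=\tfrac{1}{2}\a$, which is cleaner than invoking the $(1,0)$-part of $\theta^{\sharp}$.
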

We begin to study the spaces $\ker(\square^{p,n-p})$ $(0\leq p\leq n)$.
\begin{theorem}\label{T9}
Let $(M,J,\theta)$ be a compact Vaisman manifold of $dim_{\C}=n$. Then we have
$$h_{\square}^{p,n-p}=s_{p,n-p-1}+s_{p-1,n-p}$$
where $s_{k}=\dim\mathcal{S}^{k}$ (resp.  $s_{p,q}=\dim\mathcal{S}^{p,q}$). Furthermore,
\begin{equation*}
\ker\De_{d}\cap\Om^{n}\cong\ker(\square)\cap\Om^{n},
\end{equation*}
\begin{equation*}
\ker(\square^{k,n-k})\cong\ker\De_{\bar{\pa}}\cap\Om^{k,n-k}.
\end{equation*}
\end{theorem}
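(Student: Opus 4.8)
The plan is to pin down each $h_{\square}^{p,n-p}$ by squeezing it between an explicit lower bound and a global upper bound coming from Tsukada's Euler-characteristic identity, so that no hands-on analysis of top-degree harmonic forms is required. I would begin with one elementary observation: writing $\square=\sum_{\delta}\De_{\delta}$ with $\delta$ ranging over $\pa,\bar{\pa},\tau,\bar{\tau},\la,\bar{\la}$ and each $\De_{\delta}=[\delta,\delta^{\ast}]$ nonnegative, the equation $\square\a=0$ forces $(\square\a,\a)=\sum_{\delta}(\|\delta\a\|^{2}+\|\delta^{\ast}\a\|^{2})=0$, hence $\delta\a=\delta^{\ast}\a=0$ for every such $\delta$. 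In particular $\De_{\bar{\pa}}\a=0$, so $\ker(\square^{p,n-p})\subseteq\ker\De_{\bar{\pa}}\cap\Om^{p,n-p}\cong H^{p,n-p}_{\bar{\pa}}(M)$ and $h_{\square}^{p,n-p}\leq h^{p,n-p}$. Summing over $p$ and invoking Theorem \ref{T10}, which gives $b^{n}=\sum_{p}h^{p,n-p}=2s_{n-1}$, yields the upper bound $h_{\square}^{n}\leq 2s_{n-1}$.

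For the matching lower bound I would, for each $p$, consider the map
$$\Phi_{p}:\mathcal{S}^{p-1,n-p}(\mathcal{V})\oplus\mathcal{S}^{p,n-1-p}(\mathcal{V})\longrightarrow\ker(\square^{p,n-p}),\quad(\b_{1},\b_{2})\longmapsto\theta^{1,0}\wedge\b_{1}+\theta^{0,1}\wedge\b_{2}.$$
By Theorem \ref{T8} the spaces $\mathcal{S}^{p-1,n-p}(\mathcal{V})$ and $\mathcal{S}^{p,n-1-p}(\mathcal{V})$ sit inside $\ker(\square^{p-1,n-p})$ and $\ker(\square^{p,n-1-p})$, so Proposition \ref{P3} gives $\theta^{1,0}\wedge\b_{1},\theta^{0,1}\wedge\b_{2}\in\ker(\square^{p,n-p})$ and $\Phi_{p}$ is well-defined. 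It is injective: the elements of $\mathcal{S}(\mathcal{V})$ are horizontal, being annihilated by contraction with the Lee and anti-Lee directions, so contracting $\Phi_{p}(\b_{1},\b_{2})$ with the $(1,0)$- and $(0,1)$-vectors dual to $\theta^{1,0}$ and $\theta^{0,1}$ returns $\b_{1}$ and $\b_{2}$ up to nonzero scalars. Hence $h_{\square}^{p,n-p}\geq s_{p-1,n-p}+s_{p,n-1-p}$, and summing gives $h_{\square}^{n}\geq 2\sum_{p}s_{p,n-1-p}=2s_{n-1}$.

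Comparing the two bounds forces every inequality to be an equality. Termwise this yields the dimension formula $h_{\square}^{p,n-p}=s_{p-1,n-p}+s_{p,n-1-p}$ and also $h_{\square}^{p,n-p}=h^{p,n-p}$; since $\ker(\square^{p,n-p})\subseteq\ker\De_{\bar{\pa}}\cap\Om^{p,n-p}$ with the two finite-dimensional spaces now of equal dimension, they coincide, which is the isomorphism $\ker(\square^{k,n-k})\cong\ker\De_{\bar{\pa}}\cap\Om^{k,n-k}$. For the last identification, the observation of the first paragraph gives $d\a=\pa\a+\bar{\pa}\a=0$ and $d^{\ast}\a=\pa^{\ast}\a+\bar{\pa}^{\ast}\a=0$ for every $\a\in\ker(\square)\cap\Om^{n}$, so $\De_{d}\a=0$; thus, using the bidegree decomposition of Theorem \ref{T2}, $\ker(\square)\cap\Om^{n}\subseteq\ker\De_{d}\cap\Om^{n}=\mathcal{H}_{d}^{n}$, and since $\dim\mathcal{H}_{d}^{n}=b^{n}=2s_{n-1}=h_{\square}^{n}$ this inclusion is an equality.

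The hard part is not any single estimate but the fact that Tsukada's decomposition (Theorem \ref{T7}) describes $\De_{\bar{\pa}}$-harmonic forms only in total degree $\leq n-1$, so the top-degree forms in $\ker(\square^{p,n-p})$ cannot be decomposed directly; the squeeze through the Euler-characteristic identity $b^{n}=2s_{n-1}$ is precisely the device that bypasses this, forcing all the separate bidegree inequalities to become equalities simultaneously. The one computation that needs genuine care is the injectivity of $\Phi_{p}$, i.e. verifying that $\theta^{1,0}\wedge(\cdot)$ and $\theta^{0,1}\wedge(\cdot)$ embed the horizontal spaces $\mathcal{S}(\mathcal{V})$ as an internal direct sum inside $\ker(\square^{p,n-p})$.
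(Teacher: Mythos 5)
Your proposal is correct and takes essentially the same route as the paper: the lower bound comes from Theorem \ref{T8} plus Proposition \ref{P3} applied to $\theta^{1,0}\wedge\b_{1}+\theta^{0,1}\wedge\b_{2}$, the upper bound from $b^{n}=2s_{n-1}$ in Theorem \ref{T10}, and the squeeze forces all inequalities to be equalities, yielding the three assertions exactly as in the paper's proof. Your two refinements---the explicit injectivity of $\Phi_{p}$ via contraction with the vertical dual vectors (which the paper leaves implicit, merely asserting the wedge products are non-zero), and stating the termwise bound $h_{\square}^{p,n-p}\leq h^{p,n-p}$ up front rather than extracting it implicitly at the end---are tightenings of the same argument, not a different approach.
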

\begin{proof}
Let $\a_{1}$ be a non-zero $(p,n-p-1)$-form in $\mathcal{S}^{p,n-p-1}$ and $\a_{2}$ a non-zero $(p-1,n-p)$-form in $\mathcal{S}^{p-1,n-p}$ . Following Proposition \ref{P3},  $\theta^{1,0}\wedge\a_{1}$ and $\theta^{0,1}\wedge\a_{2}$ are two non-zero differential forms  in $\ker(\square^{p,q})$. Therefore,
$$h_{\square}^{p,n-p}\geq s_{p,n-p-1}+s_{p-1,n-p}.$$
It's easy to see $\ker(\square)\cap\Om^{n}\subset\ker\De_{d}\cap\Om^{n}$. Following Theorem \ref{T10}, we then have
$$b^{n}=2s_{n-1}\geq\sum_{p=0}^{n}h_{\square}^{p,n-p}.$$ 
Therefore,
$$2s_{n-1}\geq\sum_{p=0}^{n}s_{p,n-p-1}+\sum_{p=0}^{n}s_{p-1,n-p}=2s_{n-1}.$$
Hence, all inequalities must be equalities. We obtain that
$$h_{\square}^{p,n-p}=s_{p,n-p-1}+s_{p-1,n-p},$$
i.e., 
if $\a\in\ker(\square^{p,n-p})$, then there are $\b_{1}\in\mathcal{S}^{p-1,n-p}$ and $\b_{2}\in\mathcal{S}^{p,n-p-1}$ such that
$$\a=\theta^{1,0}\wedge\b_{1}+\theta^{0,1}\wedge\b_{2},$$
Following $\ker(\square)\cap\Om^{n}=\oplus_{0\leq k\leq n}\ker({\square}^{k,n-k})$ and $b^{n}=2s_{n-1}=\sum_{k=0}^{n}h_{\square}^{k,n-k}$, we then have
\begin{equation*}
\ker\De_{d}\cap\Om^{n}\cong\ker(\square)\cap\Om^{n},
\end{equation*}
Following Theorem \ref{T10}, we get
$$b^{n}=\sum_{k=0}^{n}h^{k,n-k}=\sum_{k=0}^{n}h_{\square}^{k,n-k}=h_{\square}^{n}.$$
We then have
$$h^{k,n-k}=h_{\square}^{k,n-k},$$
i.e., 
$$\ker(\square^{k,n-k})\cong\ker\De_{\bar{\pa}}\cap\Om^{k,n-k}.$$
\end{proof}
\begin{corollary}\label{C4}
Let $(M,J,\theta)$ be a compact Vaisman manifold of $dim_{\C}=n\geq2$.\\
	(1) If the Betti number $b^{n}=0$, then for any $0\leq k\leq 2n$, we have
	$$\mathcal{H}_{\square}^{k}=\{0\}.$$
	(2) If the Betti number $b^{n}=2$, then 
	$$\mathcal{H}^{n,0}_{\bar{\pa}}(M)=\mathcal{H}^{n-1,0}_{\bar{\pa}}(M)=\{0\}.$$
\end{corollary}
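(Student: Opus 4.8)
The plan is to express everything in terms of the transverse foliate dimensions $s_{p,q}=\dim\mathcal{S}^{p,q}(\mathcal{V})$ and to feed these into the identity $b^{n}=2s_{n-1}$ of Theorem \ref{T10}, combined with the degreewise descriptions of $\ker(\square)$ provided by Theorems \ref{T8}, \ref{T9} and \ref{P4}.

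For part (1), Theorem \ref{T4} already disposes of every degree with $|k-n|\geq 2$, so I would only need to treat $k=n-1,\,n,\,n+1$. The hypothesis $b^{n}=0$ gives $s_{n-1}=0$ by Theorem \ref{T10}. For $k=n-1$, Theorem \ref{T8} yields $h_{\square}^{n-1}=\sum_{p}\dim\ker(\square^{p,n-1-p})=\sum_{p}s_{p,n-1-p}=s_{n-1}=0$. For $k=n$, Theorem \ref{T9} gives $h_{\square}^{p,n-p}=s_{p,n-p-1}+s_{p-1,n-p}$, and summing over $p$ collapses the total to $2s_{n-1}=0$. Finally, for $k=n+1$ I would invoke the Hodge $\ast$-duality of Theorem \ref{T2}: since $\ast$ carries $\ker(\square^{p,q})$ isomorphically onto $\ker(\square^{n-q,n-p})$, it induces $\mathcal{H}_{\square}^{n+1}\cong\mathcal{H}_{\square}^{n-1}=\{0\}$. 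Together these exhaust all $0\leq k\leq 2n$.

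For part (2), Theorem \ref{T10} turns $b^{n}=2$ into $s_{n-1}=1$, and by Corollary \ref{C1} the three spaces $\mathcal{H}^{n,0}_{\bar{\pa}}(M)$, $\mathcal{H}^{n-1,0}_{\bar{\pa}}(M)$ and $\mathcal{S}^{n-1,0}(\mathcal{V})$ are all isomorphic, so it suffices to show $\mathcal{S}^{n-1,0}(\mathcal{V})=\{0\}$. I would argue by conjugation symmetry: both $\De'$ and $\La'=i_{\w'}$ are real operators, and complex conjugation interchanges bidegrees while preserving the $\mathcal{V}$-foliate and transverse-effective conditions, so $\overline{\mathcal{S}^{p,q}(\mathcal{V})}=\mathcal{S}^{q,p}(\mathcal{V})$ and hence $s_{p,q}=s_{q,p}$. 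Since $n\geq 2$ the bidegrees $(n-1,0)$ and $(0,n-1)$ are distinct; were $s_{n-1,0}\geq 1$, then $s_{n-1}\geq s_{n-1,0}+s_{0,n-1}=2s_{n-1,0}\geq 2$, contradicting $s_{n-1}=1$. Hence $s_{n-1,0}=0$, and the isomorphisms of Corollary \ref{C1} finish the argument.

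The computations themselves are routine once this scaffolding is assembled; the only genuinely delicate point is the bidegree bookkeeping in part (2), namely singling out which summand of $\mathcal{S}^{n-1}(\mathcal{V})$ may survive when $s_{n-1}=1$. The conjugation symmetry $s_{p,q}=s_{q,p}$ is exactly what excludes the corner $(n-1,0)$, and the step requiring the most care is verifying that this symmetry holds at the level of the transverse effective harmonic spaces $\mathcal{S}^{p,q}(\mathcal{V})$, which rests precisely on $\De'$ and $\La'$ being real operators.
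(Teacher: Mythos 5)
Your proposal is correct, and part (2) takes a genuinely different route from the paper. For part (1) the two arguments essentially coincide: degrees $|k-n|\geq 2$ via Theorem \ref{T4}, then $b^{n}=2s_{n-1}$ (Theorem \ref{T10}) kills degrees $n-1$ and $n$ through Theorems \ref{T8} and \ref{T9}; in fact you are more careful than the paper, which never explicitly treats $k=n+1$, whereas you dispose of it by the Hodge $\ast$-duality of Theorem \ref{T2}. For part (2), however, the paper argues by contradiction inside the $\square$-harmonic theory: if $h^{n-1,0}=1$, it takes a nonzero $\a\in\ker(\square^{n-1,0})\cong\mathcal{S}^{n-1,0}(\mathcal{V})$ (Theorem \ref{T8}), wedges with $\theta^{1,0}$ and $\theta^{0,1}$ to produce nonzero elements of $\ker(\square^{n,0})$ and $\ker(\square^{n-1,1})$ via Proposition \ref{P3}, and then uses the conjugation duality $\ker(\square^{p,q})=\overline{\ker(\square^{q,p})}$ of Theorem \ref{T2} to force $b^{n}=\sum_{p+q=n}h^{p,q}_{\square}\geq 4$, contradicting $b^{n}=2$. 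You instead push the conjugation symmetry down to the transverse level, proving $s_{p,q}=s_{q,p}$ because $\De'$ and $\La'=i_{\w'}$ are real operators on basic forms (the splitting $TM=\mathcal{V}\oplus\mathcal{V}^{\perp}$ is real, so each bigraded component of $d$ is real, and $\w'=-dJ\theta$ is a real $(1,1)$-form), and then read off $s_{n-1,0}=0$ directly from $s_{n-1}=1$ since $(n-1,0)\neq(0,n-1)$ when $n\geq 2$; Corollary \ref{C1} converts this into the vanishing of both Dolbeault spaces. Your route is more self-contained: it avoids Proposition \ref{P3} and the degree-$n$ bookkeeping entirely, and it is uniform in $n$ (the paper's count $b^{n}\geq 2h^{n,0}_{\square}+2h^{n-1,1}_{\square}$ double-counts the bidegree $(1,1)$ when $n=2$, though its conclusion still holds there). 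What the paper's route buys in exchange is that it stays entirely within machinery already established in the text, whereas your key symmetry $s_{p,q}=s_{q,p}$ — while standard for transversally K\"{a}hler foliations and correctly justified by your realness argument — is a fact the paper never states and which you must verify, exactly as you flag.
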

\begin{proof}
If $b^{n}=0$, then
$$\mathcal{H}_{\square}^{n}=\mathcal{H}_{\square}^{n-1}=0.$$
If $b^{n}=2$, then following Corollary \ref{C1} and Theorem \ref{T10}, we have $$2h^{n-1,0}\leq 2s_{n-1}=b^{n}.$$
Therefore,
$h^{n-1,0}(M)\leq1$. When $h^{n-1,0}(M)=1$, we denote by $\a$ a non-zero $(n-1,0)$-form in $\ker(\square^{n-1,0})$. Following Theorem \ref{T8}, $\a\in\mathcal{S}^{n-1,0}(\mathcal{V})$. Therefore, $\theta^{1,0}\wedge\a$ and  $\theta^{0,1}\wedge\a$ are not zero. Then by Proposition \ref{P3}, we get  $\theta^{1,0}\wedge\a\in\ker(\square^{n,0})$,  $\theta^{0,1}\wedge\a\in\ker(\square^{n-1,1})$. Therefore, we get
$$h_{\square}^{n,0}=h_{\square}^{n-1,0}=1,\  h_{\square}^{n-1,1}\geq h_{\square}^{n-1,0}=1.$$ 
Therefore, 
$$b^{n}=\sum_{p+q=n}h_{\square}^{p,q}\geq 2h_{\square}^{n,0}+2h_{\square}^{n-1,1}\geq4.$$
This contradicts the fact that $b^{n}=2$. Hence $h^{n-1,0}(M)=0$.
\end{proof}
\begin{remark}
For larger number $b^{n}$, we cannot prove that $s_{n-1,0}$ must be zero. For example, when $n=3$, $b^{3}=4$, i.e., $s_{2}=h^{1,1}_{\square}+2h^{2,0}_{\square}=2$, we get
$$b^{3}=2h^{3,0}_{\square}+2h^{2,1}_{\square}\geq 4h^{2,0}_{\square}.$$
Therefore, $h^{2,0}_{\square}=0$ or $1$. There are two cases as follows:\\
(i) $h_{\square}^{1,1}=2$, $h_{\square}^{0,2}=h_{\square}^{2,0}=0$, $h_{\square}^{2,1}=h_{\square}^{1,2}=2$ and $h_{\square}^{0,3}=h_{\square}^{3,0}=0$;\\
(ii)$h_{\square}^{1,1}=0$, $h_{\square}^{0,2}=h_{\square}^{2,0}=1$, $h_{\square}^{2,1}=h_{\square}^{1,2}=1$ and $h_{\square}^{0,3}=h_{\square}^{3,0}=1$.\\
\end{remark}
\section*{Acknowledgements}
The author thanks S.O.Wilson for his useful comments and suggestions, which enhance the quality of this article. I would also like to thank the anonymous referee for careful reading of my manuscript and helpful comments. This work was supported in part by NSF of China (11801539) and the Fundamental Research Funds of the Central Universities (WK3470000019), the USTC Research Funds of the Double First-Class Initiative (YD3470002002).

\bigskip
\footnotesize

\end{document}